\newtheorem{theorem}{Theorem}
\newtheorem{proposition}[theorem]{Proposition}
\newtheorem{lemma}[theorem]{Lemma}
\theoremstyle{definition}
\newcommand{\eqdef}{\overset{\mathrm{def}}{=}}
\begin{document}

\title{An Elementary Linear-Algebraic Proof without Computer-Aided Arguments for the Group Law on Elliptic Curves}

\author{Koji Nuida${}^1{}^2$}
\date{${}^1$ Institute of Mathematics for Industry (IMI), Kyushu University, Fukuoka 819-0395, Japan \\
\url{nuida@imi.kyushu-u.ac.jp} \\
${}^2$ National Institute of Advanced Industrial Science and Technology (AIST), Tokyo, Japan}


\maketitle

\begin{abstract}
The group structure on the rational points of elliptic curves plays several important roles, in mathematics and recently also in other areas such as cryptography.
However, the famous proofs for the group property (in particular, for its associative law) require somewhat advanced mathematics and therefore are not easily accessible by non-mathematician.
On the other hand, there have been attempts in the literature to give an elementary proof, but those rely on computer-aided calculation for some part in their proofs. 
In this paper, we give a self-contained proof of the associative law for this operation, assuming mathematical knowledge only at the level of basic linear algebra and not requiring computer-aided arguments.\\
\ \\
\textit{Keywords:} elliptic curves, group law, elementary proof \\
2010 Mathematics Subject Classification. Primary 14H52; Secondary 14G50.
\end{abstract}

\section{Introduction}
\label{sec:intro}

The well-known group structure on rational points of elliptic curves is one of the famous examples of \lq\lq mysterious'' mathematical phenomena that have also attracted many interests from mathematical non-specialists.
One of the reasons is that rational point groups on elliptic curves have practical applications such as so-called Elliptic Curve Cryptography \cite{Kob87,Mil86} and so-called Elliptic Curve Method in integer factorization \cite{Len85}.
As the elliptic curves themselves (when determined concretely by the Weierstrass equations) and the group operation for their rational points are defined in an elementary manner, even people without advanced mathematical knowledge can \emph{use} elliptic curve groups in application.
On the other hand, the existing \emph{proofs} in the literature (to the author's best knowledge) showing that the operation indeed defines a group are not easy for mathematical non-specialists to \emph{understand} by themselves, rather than just \emph{believing} professional mathematicians who proved the group law or computer programs that symbolically verified the group law (see Section \ref{subsec:intro__related_work} below).
Therefore, from not only mathematical but also practical or educational viewpoints, it is worthy to give a proof for the group law over elliptic curves (in particular, the associativity, which is the only part with significant difficulty) that is easier to understand even for mathematical non-specialists.

Towards this goal, in this paper we give a new self-contained proof for the aforementioned associativity, by revisiting a famous proof in the literature and removing the use of advanced mathematical knowledge inside the original proof.
As a result, the required mathematical knowledge in our proof is only at the level of basic linear algebra, and our proof does not require heavy computation that is usually outsourced to computers.

\subsection{Our Result and Related Work}
\label{subsec:intro__related_work}

Before explaining the idea of our proof, we compare the following four famous proof strategies for the group law over elliptic curves.

\paragraph{Using algebraic geometry.}

From the viewpoint of algebraic geometry, the operator defined over rational points of an elliptic curve $E$ satisfies the group law because it is naturally isomorphic to the degree-$0$ part $\mathrm{Pic}^0(E)$ of the Picard group of $E$ (see e.g., \cite[Proposition III.3.4]{Silverman}).
It is an elegant proof, but it relies on many advanced mathematics such as those behind the Riemann--Roch Theorem used in the proof.

\paragraph{Using complex analysis.}

There is also a relation between the Weierstrass $\wp$ function and elliptic curves $E$ over the complex field $\mathbb{C}$, which also naturally induces the group structure on $E$ (see e.g., \cite[Section 2.2]{Silverman+}).
This direction may be accessible for people who are familiar with complex analysis.
However, to \lq\lq transfer'' the result holding only over $\mathbb{C}$ to arbitrary fields, we need some other machinery such as the Lefschetz's Principle (see e.g., \cite{Ekl73}) which requires advanced knowledge of mathematical logic.

\paragraph{Using direct calculation.}

As the operation for rational points is described by concrete rational functions in coordinates of the original points, it is in principle possible to verify the associativity just by direct calculation.
This should be the most elementary proof if succeeded, and there have been some attempts with this direction \cite{Fri17,Rus17,The07}.
However, the papers \cite{Rus17,The07} are focusing on computer-aided formal proofs of the group law, rather than proving it by hands.
To the author's best knowledge, the work by Friedl \cite{Fri17} is the closest in the literature to the complete success in this direction.
However, even in that paper, the detail for the most complicated part (at the end of Lemma 2.1) is omitted by just saying that it is verified by computer, therefore the proof is still not entirely hand-made.
Such a computer-aided proof should not be unreasonably undervalued; but for the aim of the present work, this is in some sense just changing to relying on computer instead of relying on mathematicians who proved, e.g., the Riemann--Roch Theorem.

\paragraph{Using the Cayley--Bacharach Theorem.}

This proof (see e.g., \cite[Appendix A]{Silverman+}) is in fact the starting point of the present work.
Let $P$, $Q$, and $R$ be rational points of an elliptic curve $E$.
Take eight points $P_1,\dots,P_8$, \emph{possibly with multiplicity}, that appear during the computation of $(P + Q) + R$ and $P + (Q + R)$, and let $P_9$ and $P_{10}$ be points associated to $(P + Q) + R$ and to $P + (Q + R)$, respectively.
Then we can take two cubic curves $F_1$ and $F_2$ (each being the union of three lines) in such a way that $F_1$ passes through $P_1,\dots,P_8,P_9$, and $F_2$ passes through $P_1,\dots,P_8,P_{10}$.
Now, as both $F_1$ and $F_2$ contain the eight points $P_1,\dots,P_8$ common to $E$, the Cayley--Bacharach Theorem implies that the sets (\emph{possibly with multiplicity}) $\{P_1,\dots,P_8,P_9\}$ and $\{P_1,\dots,P_8,P_{10}\}$ must coincide, therefore $P_{10} = P_9$ and $P + (Q + R) = (P + Q) + R$.

When $P_1,\dots,P_8,P_9$ are all distinct, the aforementioned form of the Cayley--Bacharach Theorem can be stated and proved by using linear algebra only, as suggested in \cite[Section 1.2]{Silverman+} and indeed done in the present paper.
However, a difficulty arises when some of these points coincide with each other so that some \emph{multiplicity} occurs.
In such a general case, a rigorous statement of the Cayley--Bacharach Theorem (even specialized to the current situation) is described by using the notion of intersection multiplicity of two curves; and the intersection multiplicity is defined by using the notion of local rings, which is somewhat less elementary in comparison to basic linear algebra.

\paragraph{Outline and properties of our proof.}

As mentioned above, our proof here has close connection to the proof based on the Cayley--Bacharach Theorem.
Recall that the difficulty in the original proof arises when dealing with intersection points of two cubic curves with multiplicity larger than one.
In more detail, we observe that the difficulty originates precisely in distinguishing the intersection points with multiplicity three from those with multiplicity two, which is necessary in the original proof for a general case.
Our first idea is that by using somewhat tricky combinatorial arguments (in Section \ref{sec:many_coincidence} below), the case involving points of multiplicity three can be reduced to the case where all the points have multiplicity at most two.
Then our second idea is that the condition of the intersection multiplicity being two (or larger) can be formulated in terms of relations between the curve $F_i$ and tangent lines of $E$, which is described just by using (formal) derivatives and is easily fitted to the linear-algebraic framework as in the aforementioned case of distinct points.
The overall proof is organized by using case-by-case analyses, but the number of cases to be considered is still not very large.

We also note that our proof does not use any particular property of the coefficient field $K$ and hence is applicable to an arbitrary field $K$.
In more detail, first, our proof deals with a general Weierstrass equation $y^2 + a_1 x y + a_3 = x^3 + a_2 x^2 + a_4 x + a_6$ directly, rather than working on its Weierstrass normal form $y^2 = x^3 + a_4 x + a_6$ (as in \cite{Fri17}, for example) which is in general not available when $\mathrm{char}(K) \in \{2,3\}$.
Secondly, our proof does not assume that $K$ is algebraically closed (in contrast to some previous proofs such as \cite{Fri17}), therefore even the notion of algebraic closure of a field is not needed.

We also mention about another related work in Washington's book \cite[Section 2.4]{Washington}.
The proof of the associativity given in that book is also similar to our proof; it becomes the same as the proof in \cite[Appendix A]{Silverman+} when the points $P_i$ appearing in the proof are all distinct.
For the other case where non-trivial multiplicity occurs, in contrast to our proof, the proof in the book deals with the difference between multiplicities two and three in some algebraic way that is more elementary than the theory of local rings used in \cite[Appendix A]{Silverman+}.
As a result, the proof in the book is \emph{almost} relying on linear algebra only.
We note, however, that there are the following two differences compared to our proof.
First, in contrast to our proof applicable to a general Weierstrass form in a unified way, the argument in \cite[Section 2.4]{Washington} assumes an expression of the given elliptic curve in short Weierstrass form only (though it is claimed that the proof is similarly extendible to more general cases).
Secondly, a part of that proof (Lemma 2.7 in the book) requires the coefficient field $K$ to have at least three elements; accordingly, an extension field should be used in the case $K = \mathbb{F}_2$.
In contrast, our argument does not require any such condition for the field $K$ and even the algebraic notion of extension fields is not needed.
From those viewpoints, we can say that our proof is more general and elementary than the proof in \cite[Section 2.4]{Washington}.

\paragraph{Acknowledgements.}

The author thanks Go Yamashita and Tsuyoshi Takagi for their valuable comments.
The author also thanks the anonymous reviewer for the careful review, especially for pointing out the related work in \cite{Washington}.
This work is supported by JST CREST Grant Number JPMJCR14D6 and JSPS KAKENHI Grant Number JP19H01804.

\section{Preliminaries}
\label{sec:preliminaries}

In this section, we summarize some basic properties of elliptic curves and fix notations used in the paper.
Let $K$ be an arbitrary field.
We consider a (smooth) elliptic curve $E$ over $K$ in the projective plane $\mathbb{P}^2$ over $K$ defined by a Weierstrass equation of the form
\[
E \colon X^3 + a_2 X^2 Z + a_4 X Z^2 + a_6 Z^3 - Y^2 Z - a_1 X Y Z - a_3 Y Z^2 = 0
\]
with $a_1,a_2,a_3,a_4,a_6 \in K$.
We often identify an elliptic curve (as well as a line and a curve) with the polynomial in its defining equation.   
We write the set of $K$-rational points of $E$ as $E(K) = \{ P \in \mathbb{P}^2 \mid E(P) = 0 \}$.
We use notations such as $[\alpha:\beta:\gamma]$ to express the projective coordinates for a point in $\mathbb{P}^2$.
Let $O = [0:1:0]$ denote the point at infinity in $E$.
We note that any other rational point of $E$ has non-zero $Z$-coordinate, therefore such a point can be expressed in a way that the $Z$-coordinate is normalized to one.

We omit proofs of the following basic properties, but note that these properties can be proved elementary.
A line $\ell$ in $\mathbb{P}^2$ means the set of solutions $[X:Y:Z]$ for an equation of the form $\ell \colon A X + B Y + C Z = 0$ with non-zero coefficient vector $(A,B,C) \in K^3$ (note that a scalar multiplication to the coefficient vector does not affect the corresponding line).
For any two distinct points $P,Q \in \mathbb{P}^2$, there is a unique line in $\mathbb{P}^2$ passing through $P$ and $Q$; in this paper, we denote it by $\overline{P|Q}$.
On the other hand, by writing the formal derivative of a polynomial $g$ by a variable $t$ as $\partial_t g$, the tangent line of $E$ at a point $P \in E(K)$ is defined to be the line in $\mathbb{P}^2$ with coefficient vector
\[
(T_X(P), T_Y(P), T_Z(P)) \eqdef ((\partial_X E)(P), (\partial_Y E)(P), (\partial_Z E)(P)) \enspace.
\]
(We note that we are considering elliptic curves without singular points, therefore the coefficient vector is always non-zero.)
In this paper, we denote this tangent line by $\overline{P|P}$.
It is easily seen that the line $\overline{P|P}$ indeed passes through $P$.
Concretely, we have
\[
\begin{split}
\partial_X E &= 3 X^2 + 2 a_2 X Z + a_4 Z^2 - a_1 YZ \enspace,\\
\partial_Y E &= - 2 Y Z - a_1 X Z - a_3 Z^2 \enspace,\\
\partial_Z E &= a_2 X^2 + 2 a_4 X Z + 3 a_6 Z^2 - Y^2 - a_1 X Y - 2 a_3 Y Z \enspace.
\end{split}
\]

When $P,Q \in E(K)$ are distinct, it holds either that $E$ and $\overline{P|Q}$ has precisely one more intersection point (over $K$) or that
$P$ or $Q$ (but not both) is the point of tangency of $\overline{P|Q}$ to $E$.
In the former case, we define $P \ast Q$ to be the third intersection point; and in the latter case, we define $P \ast Q$ to be the point of tangency.
On the other hand, when $P \in E(K)$, it holds either that $E$ and $\overline{P|P}$ has precisely one more intersection point (over $K$) or that $E$ and $\overline{P|P}$ intersects at $P$ only.
In the former case, we define $P \ast P$ to be the second intersection point; and in the latter case, we define $P \ast P \eqdef P$.
In particular, the point $O$ lies in the latter case and hence $O \ast O = O$.
Note that $P \ast Q = Q \ast P$ for any $P,Q \in E(K)$ by the symmetry of the definition of $\ast$. 
We define
\[
-P \eqdef P \ast O \mbox{ for } P \in E(K) \enspace.
\]
It is shown that $-O = O$ and
\begin{equation}
\label{eq:negation_formula}
-P = [\alpha: - a_1 \alpha - a_3 - \beta: 1] \mbox{ for } P = [\alpha: \beta: 1] \in E(K) \enspace.
\end{equation}
(We emphasize that an explicit formula for $P \ast Q$ is not used.)
By definition and \eqref{eq:negation_formula},
\begin{equation}
\label{eq:cancellation_property}
(P \ast Q) \ast P = Q \mbox{ and } -(P \ast Q) = (-P) \ast (-Q) \mbox{ for } P,Q \in E(K) \enspace,
\end{equation}
therefore $P \ast (-P) = O$ and $-(-P) = P$ for $P \in E(K)$.
These relations are frequently used in our proof.

We are going to give a proof of the fact that the operator $+$ defined by 
\[
P + Q \eqdef -(P \ast Q) \mbox{ for } P,Q \in E(K)
\]
is associative.
Note that this is a commutative operator.
By \eqref{eq:cancellation_property}, we have
\[
(P + Q) + R
= -( ( -(P \ast Q) ) \ast R)
= ( -( -(P \ast Q) ) ) \ast (-R)
= (P \ast Q) \ast (-R)
\]
and (by switching $P$ and $R$ in the argument above)
\[
P + (Q + R) = (R \ast Q) \ast (-P) \enspace.
\]
Hence it suffices to prove the following property:

\begin{theorem}
\label{thm:rational_point_group}
We have $(P \ast Q) \ast (-R) = (R \ast Q) \ast (-P)$ for any $P,Q,R \in E(K)$.
\end{theorem}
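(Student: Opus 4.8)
The plan is to realize Theorem~\ref{thm:rational_point_group} as an instance of the Cayley--Bacharach phenomenon, proved purely by linear algebra. Writing $A \eqdef P \ast Q$ and $B \eqdef R \ast Q$, the goal becomes $A \ast (-R) = B \ast (-P)$. First I would record the six lines whose intersections with $E$ feed the two sides. The line $\overline{P|Q}$ meets $E$ in $P,Q,A$; the line $\overline{R|O}$ meets $E$ in $R,O,-R$ (since $-R = R \ast O$); and the line $\overline{B|(-P)}$ meets $E$ in $B,-P,B\ast(-P)$. Dually, $\overline{R|Q}$ meets $E$ in $R,Q,B$; the line $\overline{P|O}$ meets $E$ in $P,O,-P$; and $\overline{A|(-R)}$ meets $E$ in $A,-R,A\ast(-R)$. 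Here I use only the definition of $\ast$ together with \eqref{eq:negation_formula} and \eqref{eq:cancellation_property}, and when two defining points happen to coincide I would read the relevant line as the tangent line $\overline{\cdot|\cdot}$, so the construction never breaks down.

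Next I would assemble the two degree-three curves
\[
F_1 \eqdef \overline{P|Q}\cdot\overline{R|O}\cdot\overline{B|(-P)}, \qquad
F_2 \eqdef \overline{R|Q}\cdot\overline{P|O}\cdot\overline{A|(-R)},
\]
each a union of three lines. Comparing their intersections with $E$, both pass through the eight points $P,Q,R,O,A,B,-P,-R$, while the ninth intersection point of $F_1$ is $B\ast(-P)$ and that of $F_2$ is $A\ast(-R)$; so it suffices to show that two cubics meeting the irreducible cubic $E$ in the same eight points must also share the ninth. For this I would set up the linear algebra directly: the cubic forms in $X,Y,Z$ form a ten-dimensional $K$-vector space, and vanishing at a point is one linear condition on it. In the principal case where $P,Q,R,O,A,B,-P,-R$ are eight distinct points in sufficiently general position, I would prove these eight conditions independent, so the space of cubics through them is exactly two-dimensional; since $E$ is irreducible it is not a scalar multiple of the reducible $F_1$, whence $\{E,F_1\}$ is a basis and $F_2 = \lambda E + \mu F_1$ for some $\lambda,\mu \in K$. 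As $F_2 \ne 0$ is reducible we have $\mu \ne 0$, and evaluating at the ninth point $A\ast(-R)$ of $F_2$, which lies on both $E$ and $F_2$, forces $F_1(A\ast(-R)) = 0$; tracing which line of $F_1$ vanishes there then identifies $A\ast(-R)$ with $B\ast(-P)$, as desired.

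The genuinely hard part is the removal of the two hypotheses used above, namely distinctness and general position of the eight points. Coincidences are forced by special configurations (for instance $P=Q$, $R=O$, or $A=-R$), and they do two kinds of damage: a line may have to be replaced by a tangent line, and, more seriously, a common point may acquire multiplicity, so that the naive count of eight independent linear conditions collapses. The remedy, following the outline in Section~\ref{sec:intro}, is to replace a bare vanishing condition at a multiple point by a tangency condition expressed through the formal derivatives $\partial_X E,\partial_Y E,\partial_Z E$ computed in Section~\ref{sec:preliminaries}; this keeps the whole argument inside linear algebra. The subtlety I expect to cost the most work is separating intersection multiplicity three from multiplicity two, since only the latter is cleanly captured by a single tangency condition. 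I would therefore first carry out a combinatorial reduction (Section~\ref{sec:many_coincidence}) showing that configurations producing a triple point can be rearranged into ones in which every point has multiplicity at most two, and only then run the derivative-based linear-algebra argument. The remaining boundary cases, such as those in which one of $P,Q,R$ equals $O$, I would dispatch by hand using \eqref{eq:cancellation_property}, where the claimed identity collapses to something immediate.
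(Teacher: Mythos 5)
Your proposal follows essentially the same route as the paper: the same two reducible cubics (with the labels of $F_1$ and $F_2$ swapped), the same dimension count on the ten-dimensional space of cubic forms with eight independent vanishing/tangency conditions, the same use of formal derivatives to encode multiplicity-two intersections, the same combinatorial reduction of multiplicity-three configurations, and the same by-hand dispatch of degenerate cases. The only cosmetic difference is that you rule out $F_2$ being a multiple of $E$ via irreducibility of $E$, whereas the paper does this more elementarily by comparing the coefficient of $X Y^2$.
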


Note that this claim is symmetric with respect to $P$ and $R$.

\section{Proof: Some Obvious Cases}
\label{appendix:sec:proof_thm:rational_point_group__point_coincidence}

First, we discuss some \lq\lq obvious'' cases as follows, where we write \lq\lq LHS'' and \lq\lq RHS'' for the left-hand side and the right-hand side of the equality in the theorem, respectively.
\begin{enumerate}
\item
\label{item:obvious__P_O}
If $P = O$ (or equivalently, $-P = O$), then LHS becomes $(-Q) \ast (-R) = -(Q \ast R)$ and RHS becomes $(R \ast Q) \ast O = -(Q \ast R)$, as desired.
By the aforementioned symmetry in the claim, the case $R = O$ (or equivalently, $-R = O$) is similar.
\item
\label{item:obvious__Q_O}
If $Q = O$, then both LHS and RHS become $(-P) \ast (-R)$, as desired.
Moreover, if $P \ast Q = -P$ or $R \ast Q = -R$, then we have $Q = O$ and hence the claim also holds.
\item
\label{item:obvious__P_R}
If $P = R$ (or equivalently, $-P = -R$), then LHS and RHS become identical, as desired.
Moreover, if $P \ast Q = R \ast Q$, then we have $P = R$ and the claim also holds.
\item
\label{item:obvious__Q_-P}
If $Q = -P$ (or equivalently, $P \ast Q = O$), then LHS becomes $O \ast (-R) = R$ and RHS becomes $(R \ast Q) \ast Q = R$, as desired.
By symmetry, the case $Q = -R$ (or equivalently, $R \ast Q = O$) is similar.
\item
\label{item:obvious__P_-R}
If $P = -R$ (or equivalently, $R = -P$), then LHS becomes $(P \ast Q) \ast P = Q$ and RHS becomes $(R \ast Q) \ast R = Q$, as desired.
\item
\label{item:obvious__P_R*Q}
If $P = R \ast Q$ (or equivalently, $R = P \ast Q$), then LHS becomes $R \ast (-R) = O$ and RHS becomes $P \ast (-P) = O$, as desired.
\item
\label{item:obvious__O_P*Q*-R}
If $O = (P \ast Q) \ast (-R)$, then we have $P \ast Q = O \ast (-R) = R$ and this is reduced to Case \ref{item:obvious__P_R*Q}.
The case $O = (R \ast Q) \ast (-P)$ is similar.
\item
\label{item:obvious__P_P*Q*-R}
If $P = (P \ast Q) \ast (-R)$, then we have $-R = (P \ast Q) \ast P = Q$ and this is reduced to Case \ref{item:obvious__Q_-P}.
The case $R = (R \ast Q) \ast (-P)$ is similar.
\item
\label{item:obvious__P_R*Q*-P}
If $P = (R \ast Q) \ast (-P)$, then we have $R \ast Q = P \ast (-P) = O$ and this is reduced to Case \ref{item:obvious__Q_-P}.
The case $R = (P \ast Q) \ast (-R)$ is similar.
\item
\label{item:obvious__Q_P*Q*-R}
If $Q = (P \ast Q) \ast (-R)$, then we have $-R = (P \ast Q) \ast Q = P$ and this is reduced to Case \ref{item:obvious__P_-R}.
The case $Q = (R \ast Q) \ast (-P)$ is similar.
\end{enumerate}
Hence the claim holds in any of the cases above; this is summarized in Table \ref{tab:addition_on_elliptic_curve__equality_for_points}.
From now, we use the names of the points as in the table.
Then our claim is to prove that $P_9 = P_{10}$, and we may assume without loss of generality that the points $P_i$ and $P_j$ are not equal whenever the corresponding cell in Table \ref{tab:addition_on_elliptic_curve__equality_for_points} is filled with a number.
\begin{table}[t!]
\centering
\caption{Possibilites for coincidence of the points; for any numbered cell, the claim has been verified when the two points coincide}
\label{tab:addition_on_elliptic_curve__equality_for_points}
\begin{tabular}{c|c|c|c|c|c|c|c|c|c|c|}
& $P_1$ & $P_2$ & $P_3$ & $P_4$ & $P_5$ & $P_6$ & $P_7$ & $P_8$ & $P_9$ & $P_{10}$ \\ \hline
$P_1 = O$ & \multicolumn{1}{c|}{} & \ref{item:obvious__P_O} & \ref{item:obvious__P_O} & \ref{item:obvious__P_O} & \ref{item:obvious__P_O} & \ref{item:obvious__Q_O} & \ref{item:obvious__Q_-P} & \ref{item:obvious__Q_-P} & \ref{item:obvious__O_P*Q*-R} & \ref{item:obvious__O_P*Q*-R} \\ \cline{3-11}
$P_2 = P$ & \multicolumn{2}{c|}{} & & \ref{item:obvious__P_R} & \ref{item:obvious__P_-R} & & & \ref{item:obvious__P_R*Q} & \ref{item:obvious__P_P*Q*-R} & \ref{item:obvious__P_R*Q*-P} \\ \cline{4-11}
$P_3 = -P$ & \multicolumn{3}{c|}{} & \ref{item:obvious__P_-R} & \ref{item:obvious__P_R} & \ref{item:obvious__Q_-P} & \ref{item:obvious__Q_O} & & & \\ \cline{5-11}
$P_4 = R$ & \multicolumn{4}{c|}{} & & & \ref{item:obvious__P_R*Q} & & \ref{item:obvious__P_R*Q*-P} & \ref{item:obvious__P_P*Q*-R} \\ \cline{6-11}
$P_5 = -R$ & \multicolumn{5}{c|}{} & \ref{item:obvious__Q_-P} & & \ref{item:obvious__Q_O} & & \\ \cline{7-11}
$P_6 = Q$ & \multicolumn{6}{c|}{} & & & \ref{item:obvious__Q_P*Q*-R} & \ref{item:obvious__Q_P*Q*-R} \\ \cline{8-11}
$P_7 = P \ast Q$ & \multicolumn{7}{c|}{} & \ref{item:obvious__P_R} & & \\ \cline{9-11}
$P_8 = R \ast Q$ & \multicolumn{8}{c|}{} & & \\ \cline{10-11}
$P_9 = (P \ast Q) \ast (-R)$ & \multicolumn{9}{c|}{} & \\ \cline{11-11}
$P_{10} = (R \ast Q) \ast (-P)$ & \multicolumn{10}{c|}{} \\ \hline
\end{tabular}
\end{table}

\section{Proof: Cases with Less Point Coincidence}
\label{appendix:sec:proof_thm:rational_point_group__less_coincidence}

In this section, we prove the following property.

\begin{proposition}
\label{prop:addition_on_E__multiplicity_less_than_three}
If no three points among $P_1,\dots,P_9$ are equal, then $P_9 = P_{10}$.
\end{proposition}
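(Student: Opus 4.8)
The plan is to recast the Cayley--Bacharach configuration in purely linear-algebraic terms over the $10$-dimensional space $V$ of cubic forms in $X,Y,Z$. First I would introduce the six lines $\ell_1 = \overline{P|Q}$, $\ell_2 = \overline{R|O}$, $\ell_3 = \overline{(P\ast Q)|(-R)}$, $m_1 = \overline{R|Q}$, $m_2 = \overline{P|O}$, and $m_3 = \overline{(R\ast Q)|(-P)}$, and form the two cubics $F_1 = \ell_1 \ell_2 m_3$ and $F_2 = m_1 m_2 \ell_3$ (products of the defining linear forms). Using only the definitions of $\ast$ and of negation, one checks that $\ell_1,\ell_2,m_1,m_2$ each pass through three of $P_1,\dots,P_8$ while $\ell_3,m_3$ each pass through two of them, so that both $F_1$ and $F_2$ vanish at all of $P_1,\dots,P_8$; in addition $F_2$ vanishes at $P_9$ (on $\ell_3$) and $F_1$ at $P_{10}$ (on $m_3$). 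Since $E$ also vanishes at $P_1,\dots,P_8$, the whole problem reduces to showing that $E$ lies in the span of $F_1$ and $F_2$.

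To obtain this, I would prove that the eight points $P_1,\dots,P_8$ impose eight linearly independent conditions on cubic forms. Granting this, the subspace of $V$ vanishing at these points is $(10-8)$-dimensional; as $F_1$ and $F_2$ are linearly independent elements of it, they span it, and hence $E = \lambda F_1 + \mu F_2$ for some $\lambda,\mu \in K$, where $\lambda \neq 0$ and $\mu \neq 0$ since otherwise $E$ would be a product of three lines, contradicting its smoothness. The delicate point is that the hypothesis forbids only triple coincidences, so two of the $P_i$ may collide at a point $P^\ast$. Whenever this happens the two colliding points lie on a common one of the six lines $L$, and $L$ is then the tangent $\overline{P^\ast|P^\ast}$ of $E$ at $P^\ast$; I would replace the now-repeated value condition at $P^\ast$ by the first-order condition that a cubic restrict to a form on $L$ vanishing to order two at $P^\ast$, which is linear in the coefficients and expressible through the formal derivatives $\partial_X E,\partial_Y E,\partial_Z E$. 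This condition is met by $E$ (by tangency), by whichever of $F_1,F_2$ has $L$ as a component, and by the remaining cubic as well, because two of its three line-components then pass through $P^\ast$.

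The conclusion is clean once $E = \lambda F_1 + \mu F_2$ is in hand. Restricting to the line $\ell_3$, on which $F_2$ vanishes identically, gives $E|_{\ell_3} = \lambda\, F_1|_{\ell_3}$ as cubic forms on $\ell_3$, so their zero multisets agree. But $E$ meets $\ell_3$ in $P_5,P_7,P_9$ while $F_1 = \ell_1 \ell_2 m_3$ meets it in $\ell_1 \cap \ell_3 = P_7$, $\ell_2 \cap \ell_3 = P_5$, and $X \eqdef m_3 \cap \ell_3$, so cancelling $P_5,P_7$ yields $P_9 = X$. Restricting symmetrically to $m_3$ (where $F_1$ vanishes) yields $P_{10} = X$, and hence $P_9 = P_{10}$. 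This formulation through multisets of zeros on a line absorbs all the coincidences allowed by the hypothesis without a separate case analysis.

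I expect the main obstacle to be the independence of the eight conditions, proved elementarily and without Riemann--Roch. The favorable structural fact is that all eight points lie on the smooth cubic $E$, so automatically no four of them are collinear (a line meets $E$ in at most three points) and no seven lie on a conic (at most six points); these are exactly the genericity inputs that drive the classical linear-algebraic argument for distinct points. The real work is to carry that argument out by hand and, above all, to incorporate the tangency conditions uniformly across the various sub-cases of which $P_i$ coincide, while also verifying that $F_1$ and $F_2$ are genuinely linearly independent.
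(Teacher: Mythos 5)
Your plan is, in substance, the paper's own proof: your $F_1$ and $F_2$ are the paper's two cubics with the labels swapped (your $\overline{R|O}$ is its $\overline{R|-R}$, etc.), and your replacement of a doubled point by a first-order condition expressed through the formal derivatives is exactly how the paper builds its augmented matrix $H$; its Lemma~\ref{lem:tangent_line_by_inner_product__three_lines} is precisely your observation that the first-order condition is satisfied by $E$, by the cubic having the tangent as a component, and by the cubic with two line-components through the point. Two places, however, fall short of a proof. First, the independence of the eight conditions is not a routine consequence of ``no four of the $P_i$ are collinear and no seven lie on a conic'': once some of the conditions are tangency conditions rather than evaluation conditions, those genericity facts do not directly apply, and the paper has to establish the full rank of $H$ by an explicit $(8+|J|)$-row reduction (Lemma~\ref{lem:addition_on_E__auxiliary_matrix_is_row_full-rank}) that occupies most of the section and itself requires a case analysis. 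This is the real content of the Proposition, and your proposal defers it entirely. (A smaller point: you need $F_1$ and $F_2$ to be linearly independent, which is not immediate in degenerate configurations; the paper sidesteps this by instead checking that $c_E$ and $c_{F_1}$ are independent via the coefficient of $X Y^2$.)

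Second, your concluding step --- reading off that the zero multiset of $E|_{\ell_3}$ is $\{P_5,P_7,P_9\}$ and ``cancelling'' $P_5$ and $P_7$ --- is not as case-free as you claim. The hypothesis forbids only triple coincidences, so $P_9$ may coincide with $P_5$, $P_7$, or another $P_i$ (none of these is excluded by Table~\ref{tab:addition_on_elliptic_curve__equality_for_points}), and to identify the root multiset of the binary cubic $E|_{\ell_3}$ with $\{P_5,P_7,P_9\}$ you must prove, for instance, that when $P_7 \ast P_5 = P_7$ the restricted form has a double root at $P_7$; that is, you must relate root multiplicities of the restriction to the purely combinatorial definition of $\ast$ given in the preliminaries. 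This is exactly the multiplicity bookkeeping that the paper performs via Lemma~\ref{lem:tangent_line_by_inner_product__three_lines} together with its final two-case analysis ($P_9$ distinct from all of $P_1,\dots,P_8$ versus $P_9 = P_i$). Your route can be completed, but it reintroduces through the back door the very intersection-multiplicity discussion it purports to avoid.
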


We prepare some notations.
Let $\mathbb{M}$ denote the vector consisting of the monic monomials of degree three in variables $X$, $Y$, and $Z$, defined by
\[
\mathbb{M}
\eqdef (X^3, Y^3, Z^3, X^2 Y, X Y^2, X^2 Z, X Z^2, Y^2 Z, Y Z^2, X Y Z) \enspace.
\]
We use terminology such as \lq\lq Column $Y^2 Z$'' to indicate the corresponding column of the vector, and write e.g., $\mathbb{M}[Y^2 Z]$ to denote the component of the vector at the indicated column (we also use similar terminology and notations for other vectors introduced later).
For $P' \in \mathbb{P}^2$, we denote by $\mathbb{M}(P')$ the vector obtained by substituting the coordinate values of $P'$ (chosen among the uncertainty of scalar multiple) into the variables in $\mathbb{M}$.
For a homogeneous polynomial $F = F(X,Y,Z)$ of degree three, we denote by $c_{F,M}$ the coefficient of a monomial $M$ in $F$.
That is, $F(X,Y,Z) = \sum_{M} c_{F,M} M$ where the index $M$ runs over the ten monomials involved in $\mathbb{M}$.
We denote by $c_F$ the vector consisting of the $c_{F,M}$'s ordered in the same way as $\mathbb{M}$.
Then we have
\begin{equation}
\label{eq:value_of_cubic_polynomial_by_inner_product}
F(P') = \mathbb{M}(P') \cdot {}^t c_F
\end{equation}
(where ${}^t (\cdot)$ denotes the transpose).
On the other hand, for each variable $t \in \{X,Y,Z\}$, let $\mathbb{M}_t$ denote the vector obtained from $\mathbb{M}$ by taking the derivative of each component with respect to the variable $t$.
Concretely,
\[
\begin{split}
\mathbb{M}_X
&\eqdef (3 X^2, 0, 0, 2 X Y, Y^2, 2 X Z, Z^2, 0, 0, Y Z) \enspace,\\
\mathbb{M}_Y
&\eqdef (0, 3 Y^2, 0, X^2, 2 X Y, 0, 0, 2 Y Z, Z^2, X Z) \enspace,\\
\mathbb{M}_Z
&\eqdef (0, 0, 3 Z^2, 0, 0, X^2, 2 X Z, Y^2, 2 Y Z, X Y) \enspace.
\end{split}
\]
Then for $P'$ and $F$ as above, we have
\[
(\partial_t F)(P') = \mathbb{M}_t(P') \cdot {}^t c_F \mbox{ for any } t \in \{X,Y,Z\} \enspace.
\]
Now for any $P' \in E(K)$, the definition of the tangent line $\overline{P'|P'}$ implies:
\begin{equation}
\label{eq:tangent_line_by_inner_product__elliptic_curve}
\mbox{There is a $\lambda \in K$ satisfying }
\begin{pmatrix}
\mathbb{M}_X(P') \\
\mathbb{M}_Y(P') \\
\mathbb{M}_Z(P')
\end{pmatrix}
\cdot {}^t c_E
= \lambda \cdot
\begin{pmatrix}
T_X(P') \\
T_Y(P') \\
T_Z(P')
\end{pmatrix}
\enspace.
\end{equation}
On the other hand, we have the following property.

\begin{lemma}
\label{lem:tangent_line_by_inner_product__three_lines}
Let $\ell_1$, $\ell_2$, and $\ell_3$ be lines in $\mathbb{P}^2$, and we define a homogeneous polynomial of degree three by $F = \ell_1 \ell_2 \ell_3$.
Let $P' = [\alpha': \beta': \gamma'] \in E(K) \setminus \{O\}$, and suppose that $\ell_1(P') = 0$.
Then the following conditions are equivalent.
In this case, we say that $P'$ is a multiple intersection point of $E$ and $F$.
\begin{itemize}
\item
There is a $\lambda \in K$ satisfying
\begin{equation}
\label{eq:lem:tangent_line_by_inner_product__three_lines__condition}
\begin{pmatrix}
\mathbb{M}_X(P') \\
\mathbb{M}_Y(P')
\end{pmatrix}
\cdot {}^t c_F
= \lambda \cdot
\begin{pmatrix}
T_X(P') \\
T_Y(P')
\end{pmatrix}
\enspace.
\end{equation}
\item
Either $\ell_1 = \overline{P'|P'}$, or at least one of $\ell_2$ and $\ell_3$ passes through $P'$.
\end{itemize}
\end{lemma}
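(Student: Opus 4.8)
The plan is to translate condition \eqref{eq:lem:tangent_line_by_inner_product__three_lines__condition} into an explicit statement about the gradient of $F$ at $P'$ and then compare it with the coefficient vector of $\ell_1$. Writing $\ell_1 = A_1 X + B_1 Y + C_1 Z$, I would first compute $(\partial_t F)(P')$ for each $t \in \{X,Y,Z\}$ by the Leibniz rule applied to $F = \ell_1 \ell_2 \ell_3$. Since $\partial_t \ell_i$ is the constant coefficient of $t$ in the linear form $\ell_i$, and since the hypothesis $\ell_1(P') = 0$ annihilates the two Leibniz terms that retain a factor of $\ell_1$, only one term survives and we obtain $(\partial_t F)(P') = (\partial_t \ell_1)\,\ell_2(P')\,\ell_3(P')$. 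Recalling that $(\partial_t F)(P') = \mathbb{M}_t(P') \cdot {}^t c_F$, this says that the gradient of $F$ at $P'$ equals $\ell_2(P')\,\ell_3(P') \cdot (A_1, B_1, C_1)$, a scalar multiple of the coefficient vector of $\ell_1$. In particular, the left-hand side of \eqref{eq:lem:tangent_line_by_inner_product__three_lines__condition} equals $\ell_2(P')\,\ell_3(P') \cdot (A_1, B_1)$.

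Next I would split into two cases according to whether $\ell_2(P')\,\ell_3(P') = 0$. If it vanishes, then at least one of $\ell_2$, $\ell_3$ passes through $P'$, so the second condition holds; and simultaneously the left-hand side of \eqref{eq:lem:tangent_line_by_inner_product__three_lines__condition} is the zero vector, so the first condition holds with $\lambda = 0$. Thus both conditions are satisfied and the equivalence is immediate in this case.

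The main case is $\ell_2(P')\,\ell_3(P') \neq 0$, where neither $\ell_2$ nor $\ell_3$ passes through $P'$, so the second condition reduces to $\ell_1 = \overline{P'|P'}$, i.e., full three-coordinate proportionality $(A_1, B_1, C_1) \parallel (T_X(P'), T_Y(P'), T_Z(P'))$. Dividing by the nonzero scalar, the first condition reduces to the two-coordinate proportionality $(A_1, B_1) \parallel (T_X(P'), T_Y(P'))$. One implication is then trivial. For the converse I would exploit the two linear relations available at $P'$: the hypothesis gives $A_1 \alpha' + B_1 \beta' + C_1 \gamma' = 0$, while the tangent line $\overline{P'|P'}$ passes through $P'$, as observed in Section \ref{sec:preliminaries}, giving $T_X(P')\alpha' + T_Y(P')\beta' + T_Z(P')\gamma' = 0$. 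Because $P' \neq O$, its $Z$-coordinate $\gamma'$ is nonzero (as noted in Section \ref{sec:preliminaries}, $O$ is the only point of $E$ with vanishing $Z$-coordinate). Writing $(A_1, B_1) = c\,(T_X(P'), T_Y(P'))$ and substituting the tangent relation into the hypothesis then forces $(C_1 - c\,T_Z(P'))\gamma' = 0$, whence $C_1 = c\,T_Z(P')$, which is the desired three-coordinate proportionality.

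The step I expect to be the main obstacle is precisely this last recovery of the $Z$-component: condition \eqref{eq:lem:tangent_line_by_inner_product__three_lines__condition} deliberately constrains only the $X$- and $Y$-derivatives, so proportionality in two coordinates must be upgraded to proportionality in all three. The upgrade hinges on both coefficient vectors annihilating the coordinate vector of $P'$, together with $\gamma' \neq 0$, which together let one solve for the remaining coordinate; this is exactly where the hypothesis $P' \neq O$ is used. A small technical point to dispatch along the way is that $(T_X(P'), T_Y(P')) \neq (0,0)$, so that two-coordinate proportionality is meaningful; this too follows from $\gamma' \neq 0$ and the tangent passing through $P'$, since otherwise the nonzero tangent vector would be a multiple of $(0,0,1)$ and would force $\gamma' = 0$.
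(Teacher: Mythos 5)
Your proposal is correct and follows essentially the same route as the paper: the Leibniz-rule computation of the gradient of $F$ at $P'$, the case split on $\ell_2(P')\ell_3(P')$, and the recovery of the $Z$-component from $\gamma' \neq 0$ together with the fact that both $\ell_1$ and $\overline{P'|P'}$ pass through $P'$. The only cosmetic difference is that the paper performs the last step at the level of the gradient of $F$ via the Euler identity $X\,\mathbb{M}_X + Y\,\mathbb{M}_Y + Z\,\mathbb{M}_Z = 3\,\mathbb{M}$ and $F(P')=0$, whereas you substitute directly into $\ell_1(P')=0$; these are equivalent.
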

\begin{proof}
Let $(A,B,C)$ be a coefficient vector for $\ell_1$.
As $\ell_1(P') = 0$, by the property of formal derivative for product of polynomials, we have
\begin{equation}
\label{eq:lem:tangent_line_by_inner_product__three_lines__relation}
\begin{pmatrix}
\mathbb{M}_X(P') \\
\mathbb{M}_Y(P') \\
\mathbb{M}_Z(P')
\end{pmatrix}
\cdot {}^t c_F
=
\begin{pmatrix}
(\partial_X F)(P') \\
(\partial_Y F)(P') \\
(\partial_Z F)(P')
\end{pmatrix}
= \ell_2(P') \ell_3(P') \cdot
\begin{pmatrix}
A \\
B \\
C
\end{pmatrix}
\enspace.
\end{equation}
Here we note that the condition that at least one of $\ell_2$ and $\ell_3$ passes through $P'$ is equivalent to the condition $\ell_2(P') \ell_3(P') = 0$.
If $\ell_2(P') \ell_3(P') = 0$, then the claim holds obviously (with $\lambda = 0$).
From now, we consider the other case $\ell_2(P') \ell_3(P') \neq 0$.
If $\ell_1 = \overline{P'|P'}$, then $(A,B)$ is a scalar multiple of $(T_X(P'),T_Y(P'))$ by the definition of the tangent line, therefore \eqref{eq:lem:tangent_line_by_inner_product__three_lines__relation} implies the claim.
Conversely, suppose that \eqref{eq:lem:tangent_line_by_inner_product__three_lines__condition} is satisfied.
By the relation $X \cdot \mathbb{M}_X + Y \cdot \mathbb{M}_Y + Z \cdot \mathbb{M}_Z = 3 \cdot \mathbb{M}$, we have
\[
\alpha' \cdot \mathbb{M}_X(P') + \beta' \cdot \mathbb{M}_Y(P') + \gamma' \cdot \mathbb{M}_Z(P')
= 3 \cdot \mathbb{M}(P') \enspace,
\]
therefore the fact $F(P') = 0$ and \eqref{eq:value_of_cubic_polynomial_by_inner_product} imply
\[
\gamma' \cdot \mathbb{M}_Z(P') \cdot {}^t c_F
= - \alpha' \cdot \mathbb{M}_X(P') \cdot {}^t c_F
- \beta' \cdot \mathbb{M}_Y(P') \cdot {}^t c_F \enspace.
\]
On the other hand, as the tangent line $\overline{P'|P'}$ passes through $P'$, we have
\[
T_X(P') \cdot \alpha' + T_Y(P') \cdot \beta' + T_Z(P') \cdot \gamma'
= 0 \enspace.
\]
By these properties together with \eqref{eq:lem:tangent_line_by_inner_product__three_lines__condition} and the fact $\gamma' \neq 0$ (recall that $P' \neq O$), it follows that $\mathbb{M}_Z(P') \cdot {}^t c_F = \lambda \cdot T_Z(P')$.
Moreover, by the current assumption $\ell_2(P') \ell_3(P') \neq 0$, it follows from \eqref{eq:lem:tangent_line_by_inner_product__three_lines__relation} that $(A,B,C)$ is a scalar multiple of $(T_X(P'),T_Y(P'),T_Z(P'))$, therefore we have $\ell_1 = \overline{P'|P'}$.
This completes the proof.
\end{proof}

We define homogeneous polynomials $F_1$ and $F_2$ of degree three by
\[
F_1 \eqdef \overline{P|-P} \cdot \overline{R|Q} \cdot \overline{P \ast Q|-R} \,,\,
F_2 \eqdef \overline{R|-R} \cdot \overline{P|Q} \cdot \overline{R \ast Q|-P} \enspace.
\]
By definition, we have $F_1(P_9) = 0$ and $F_2(P_{10}) = 0$, and also $F_1(P_i) = F_2(P_i) = 0$ for $1 \leq i \leq 8$.
In the following argument, we are going to show the existence of a linear relation for vectors $c_E$, $c_{F_1}$, and $c_{F_2}$, by utilizing the relations $F_1(P_i) = F_2(P_i) = 0$ above and a system of linear equations obtained from \eqref{eq:value_of_cubic_polynomial_by_inner_product}, \eqref{eq:tangent_line_by_inner_product__elliptic_curve}, and \eqref{eq:lem:tangent_line_by_inner_product__three_lines__condition}.
By recalling the assumption due to Table \ref{tab:addition_on_elliptic_curve__equality_for_points} that $P_i \neq O$ for any $i \geq 2$, we set $P_1 = [0: 1: 0]$ and $P_i = [\alpha_i: \beta_i: 1]$ with $\alpha_i,\beta_i \in K$ for $i \geq 2$.

We construct auxiliary sets $I,J \subseteq \{1,2,\dots,8\}$ of indices in the following manner.
First, for $1 \leq i \leq 8$, if $P_i \neq P_j$ for any $1 \leq j \leq 8$ with $j \neq i$, then we add the $i$ to $I$.
On the other hand, for different indices $i,j \in \{1,\dots,8\}$, if $P_i = P_j$, then we add one of these two indices to $I$ and add the other index to $J$.
(Recall the current hypothesis that no three points among $P_1,\dots,P_9$ coincide with each other.)
Now in the latter case, the index among $i$ and $j$ to be added to $I$ is chosen by the following rule.
\begin{enumerate}
\item
If either $i$ or $j$ is in $\{2,4\}$, then we always add this index to $I$.
(Recall from Table \ref{tab:addition_on_elliptic_curve__equality_for_points} that now $P_2 \neq P_4$, therefore this index is uniquely determined.)
\item
Otherwise, if either $i$ or $j$ is in $\{3,5\}$, then we always add this index to $I$.
(Recall from Table \ref{tab:addition_on_elliptic_curve__equality_for_points} that now $P_3 \neq P_5$, therefore this index is uniquely determined.)
\item
Otherwise, if $i,j \in \{6,7,8\}$, then we have $6 \in \{i,j\}$ (recall from Table \ref{tab:addition_on_elliptic_curve__equality_for_points} that now $P_7 \neq P_8$), and we add the index $6$ to $J$ and the other index to $I$.
(In the other cases, we may freely choose any of the two indices.)
\end{enumerate}
By definition, $\{1,\dots,8\}$ is the disjoint union of $I$ and $J$, and we always have $1,2,4 \in I$.
Moreover, as $P_3 \neq P_4$ and $P_5 \neq P_2$ by Table \ref{tab:addition_on_elliptic_curve__equality_for_points}, the conditions $3 \in J$ and $P_2 = P_3$ are equivalent, and the conditions $5 \in J$ and $P_4 = P_5$ are equivalent.

Now for each $i \in I$, we define the vector $v[i] \in K^{10+|J|}$ by adding $|J|$ components with entry $0$ at the end of the vector $\mathbb{M}(P_i) \in K^{10}$.
Now each of the added component can be associated to an element $j$ of $J$; we write \lq\lq Column $P_j$'' and write $v[i][P_j]$ to indicate the component associated to $j \in J$ (we also use similar terminology and notation for the vectors introduced below).
On the other hand, for each $j \in J$, we define the vector $v[j,X]$ (respectively, $v[j,Y]$) in $K^{10+|J|}$ by adding $|J|$ components at the end of the vector $\mathbb{M}_X(P_j) \in K^{10}$ (respectively, $\mathbb{M}_Y(P_j) \in K^{10}$) in a way that its entry at Column $P_j$ becomes $-T_X(P_j)$ (respectively, $-T_Y(P_j)$) and that at Column $P_k$ ($k \in J$, $k \neq j$) becomes $0$.
We have obtained $|I| + 2 |J| = 8 + |J|$ row vectors in $K^{10+|J|}$; we define $H$ to be the matrix over $K$ consisting of the $8 + |J|$ row vectors.
(See Figure \ref{fig:matrix_H} for an example of the structure of matrix $H$ for the case $P_3 = P_4$ and $P_6 = P_8$.)
Then we have the following.

\begin{figure}[t]
\centering
\begin{tabular}{lr|c|cc|} \cline{3-5}
Row $v[1]$ & $\to$ & \hbox to60pt{\hss$\mathbb{M}(P_1)$\hss} & $0$ & $0$ \\ \cline{3-3}
Row $v[2]$ & $\to$ & \hbox to60pt{\hss$\mathbb{M}(P_2)$\hss} & $0$ & $0$ \\ \cline{3-3}
Row $v[3,X]$ & $\to$ & \hbox to60pt{\hss$\mathbb{M}_X(P_3)$\hss} & $-T_X(P_3)$ & $0$ \\ \cline{3-3}
Row $v[3,Y]$ & $\to$ & \hbox to60pt{\hss$\mathbb{M}_Y(P_3)$\hss} & $-T_Y(P_3)$ & $0$ \\ \cline{3-3}
Row $v[4]$ & $\to$ & \hbox to60pt{\hss$\mathbb{M}(P_4)$\hss} & $0$ & $0$ \\ \cline{3-3}
Row $v[5]$ & $\to$ & \hbox to60pt{\hss$\mathbb{M}(P_5)$\hss} & $0$ & $0$ \\ \cline{3-3}
Row $v[6,X]$ & $\to$ & \hbox to60pt{\hss$\mathbb{M}_X(P_6)$\hss} & $0$ & $-T_X(P_6)$ \\ \cline{3-3}
Row $v[6,Y]$ & $\to$ & \hbox to60pt{\hss$\mathbb{M}_Y(P_6)$\hss} & $0$ & $-T_Y(P_6)$ \\ \cline{3-3}
Row $v[7]$ & $\to$ & \hbox to60pt{\hss$\mathbb{M}(P_7)$\hss} & $0$ & $0$ \\ \cline{3-3}
Row $v[8]$ & $\to$ & \hbox to60pt{\hss$\mathbb{M}(P_8)$\hss} & $0$ & $0$ \\ \cline{3-3}\cline{3-5}
\multicolumn{2}{c}{} & \multicolumn{1}{c}{$\underbrace{\hbox to60pt{}}_{\mbox{$10$ columns}}$} & \multicolumn{1}{c}{$\underbrace{\phantom{-T_X(P_3)}}_{\mbox{Column $P_3$}}$} & \multicolumn{1}{c}{$\underbrace{\phantom{-T_X(P_6)}}_{\mbox{Column $P_6$}}$} \\
\end{tabular}
\caption{An example of the matrix $H$, for the case where $P_3 = P_4$ and $P_6 = P_8$; hence $I = \{1,2,4,5,7,8\}$ and $J = \{3,6\}$}
\label{fig:matrix_H}
\end{figure}

\begin{lemma}
\label{lem:addition_on_E__relation_of_auxiliary_matrix}
There are vectors $w_E,w_{F_1},w_{F_2} \in K^{|J|}$ satisfying that $H \cdot {}^t (c_E \mid\mid w_E) = \vec{0}$ and $H \cdot {}^t (c_{F_b} \mid\mid w_{F_b}) = \vec{0}$ for $b = 1,2$, where \lq\lq $\mid\mid$'' denotes the concatenation of vectors.
\end{lemma}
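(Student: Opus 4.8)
The plan is to construct the three padding vectors $w_E,w_{F_1},w_{F_2}$ one coordinate at a time and to verify that every row of $H$ annihilates the corresponding concatenated vector. For a cubic $G\in\{E,F_1,F_2\}$ with coefficient vector $c_G$, the $I$-rows of $H$ contribute $\mathbb{M}(P_i)\cdot{}^t c_G=G(P_i)$ by \eqref{eq:value_of_cubic_polynomial_by_inner_product}, while each pair of $J$-rows contributes $(\partial_X G)(P_j)-T_X(P_j)\,w_G[P_j]$ and $(\partial_Y G)(P_j)-T_Y(P_j)\,w_G[P_j]$. Thus the $I$-rows demand $G(P_i)=0$, and the $J$-rows demand that $((\partial_X G)(P_j),(\partial_Y G)(P_j))$ be a scalar multiple of $(T_X(P_j),T_Y(P_j))$, the scalar being recorded as $w_G[P_j]$.

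First I would dispose of the $I$-rows for all three cubics simultaneously: since $I\subseteq\{1,\dots,8\}$ and $E(P_i)=F_1(P_i)=F_2(P_i)=0$ for these indices, the equations $G(P_i)=0$ hold automatically. For the $J$-rows with $G=E$, the defining identity $(T_X(P_j),T_Y(P_j))=((\partial_X E)(P_j),(\partial_Y E)(P_j))$ makes the proportionality hold with scalar $1$, which is precisely \eqref{eq:tangent_line_by_inner_product__elliptic_curve} with $\lambda=1$; accordingly I set $w_E[P_j]=1$ for every $j\in J$.

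The heart of the argument is the $J$-rows for $G=F_b$. Fix $j\in J$ and $b\in\{1,2\}$; the required proportionality is exactly \eqref{eq:lem:tangent_line_by_inner_product__three_lines__condition}. Since $1\in I$ forces $j\ge 2$, we have $P_j\ne O$, and since $F_b(P_j)=0$ at least one constituent line $\ell_1$ of $F_b$ passes through $P_j$; hence Lemma \ref{lem:tangent_line_by_inner_product__three_lines} applies and reduces the proportionality to the assertion that $P_j$ is a multiple intersection point of $E$ and $F_b$. To establish this I would use the defining property of $J$, namely that $P_i=P_j$ for some index $i\ne j$. In each of $F_1$ and $F_2$, every one of $P_1,\dots,P_8$ arises as one of the three intersection points of $E$ with exactly one constituent line. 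If $i$ and $j$ arise on a single common line $\ell$, then two of the three intersection points of $E$ with $\ell$ coincide at $P_j$, so by the definition of $\ast$ the line $\ell$ is tangent to $E$ at $P_j$, i.e.\ $\ell=\overline{P_j|P_j}$; this is the first alternative of the lemma. If instead $i$ and $j$ arise on two different lines, then $P_j=P_i$ lies on both of them, giving the second alternative. In either case the proportionality holds, and I take $w_{F_b}[P_j]$ to be the resulting scalar.

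The one real obstacle is the incidence bookkeeping behind the last step: I must check that, for every coincidence left open by Table \ref{tab:addition_on_elliptic_curve__equality_for_points}, the indices $i$ and $j$ fall under one of these two alternatives in \emph{both} $F_1$ and $F_2$. This is read off directly from the decompositions $F_1=\overline{P|-P}\cdot\overline{R|Q}\cdot\overline{P\ast Q|-R}$ and $F_2=\overline{R|-R}\cdot\overline{P|Q}\cdot\overline{R\ast Q|-P}$, which fix once and for all which three of the $P_k$ meet each line; each admissible pair then turns out either to share a single line (producing a tangent) or to be split across two lines. Because both alternatives already deliver \eqref{eq:lem:tangent_line_by_inner_product__three_lines__condition}, no case requires a finer analysis of multiplicities, and the total number of cases stays small.
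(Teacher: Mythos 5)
Your proposal is correct and follows essentially the same route as the paper: the $I$-rows vanish because $P_1,\dots,P_8$ lie on $E$, $F_1$, and $F_2$; the $J$-rows for $E$ vanish via \eqref{eq:tangent_line_by_inner_product__elliptic_curve}; and the $J$-rows for $F_b$ vanish via Lemma \ref{lem:tangent_line_by_inner_product__three_lines} once $P_j$ is recognized as a multiple intersection point of $E$ and $F_b$. Your two-alternative check (the coinciding indices sharing a constituent line forces that line to be a tangent, while lying on two distinct constituent lines gives the second alternative of the lemma) is precisely the content the paper compresses into the remark that \lq\lq the definition of $F_b$ implies that $P_j = P_i$ is a multiple intersection point,'' so you have merely spelled out the same step in slightly more detail.
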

\begin{proof}
First, for $i \in I$, we have $E(P_i) = F_1(P_i) = F_2(P_i) = 0$ as $i \leq 8$.
Now by \eqref{eq:value_of_cubic_polynomial_by_inner_product} and the definition of $v[i]$, we have (regardless of the vector $w_E$)
\[
v[i] \cdot {}^t (c_E \mid\mid w_E)
= \mathbb{M}(P_i) \cdot {}^t c_E
= E(P_i)
= 0
\]
and similarly $v[i] \cdot {}^t (c_{F_b} \mid\mid w_{F_b}) = 0$ for $b = 1,2$.

Secondly, for $j \in J$, we can take an index $i \in I$ with $P_j = P_i$.
We take $\lambda \in K$ as in \eqref{eq:tangent_line_by_inner_product__elliptic_curve} for $P' = P_j$ and write it as $\lambda_{E,j}$.
By setting $w_E[P_j] = \lambda_{E,j}$, the definition of $v[j,X]$ implies (regardless of the other columns of $w_E$)
\[
v[j,X] \cdot {}^t (c_E \mid\mid w_E)
= \mathbb{M}_X(P_j) \cdot {}^t c_E - \lambda_{E,j} \cdot T_X(P_j)
= 0
\]
and similarly $v[j,Y] \cdot {}^t (c_E \mid\mid w_E) = 0$.
On the other hand, for $b = 1,2$, the definition of $F_b$ implies that $P_j = P_i$ is a multiple intersection point of $E$ and $F_b$ in the sense of Lemma \ref{lem:tangent_line_by_inner_product__three_lines} (see the latter condition in that lemma).
Now Lemma \ref{lem:tangent_line_by_inner_product__three_lines} implies that there is a $\lambda \in K$ satisfying \eqref{eq:lem:tangent_line_by_inner_product__three_lines__condition} for $P' = P_j$.
We write the $\lambda$ as $\lambda_{F_b,j}$.
By setting $w_{F_b}[P_j] = \lambda_{F_b,j}$, the definition of $v[j,X]$ implies (regardless of the other columns of $w_{F_b}$)
\[
v[j,X] \cdot {}^t (c_{F_b} \mid\mid w_{F_b})
= \mathbb{M}_X(P_j) \cdot {}^t c_{F_b} - \lambda_{F_b,j} \cdot T_X(P_j)
= 0
\]
and similarly $v[j,Y] \cdot {}^t (c_{F_b} \mid\mid w_{F_b}) = 0$.
By these arguments, the claim holds by choosing the components of vectors $w_E$, $w_{F_1}$, and $w_{F_2}$ as above.
\end{proof}

By Lemma \ref{lem:addition_on_E__auxiliary_matrix_is_row_full-rank} given later, $H$ has rank $8 + |J|$, therefore the kernel of $H$ has dimension two.
On the other hand, the vectors $c_E$ and $c_{F_1}$ are linearly independent.
Indeed, if $c_E$ were a scalar multiple of $c_{F_1}$, then the properties $c_{E,X^3},c_{E,Y^2} \neq 0$ and $c_{E,Y^3} = 0$ would imply that for the three degree-$1$ factors of $F_1$, all of the coefficients of $X$ must be non-zero and precisely two of the coefficients of $Y$ must be non-zero.
However, now we have $c_{F_1,X Y^2} \neq 0$, contradicting the fact $c_{E,X Y^2} = 0$.
Now the linear independence of $c_E$ and $c_{F_1}$ implies that the vectors $\widehat{c}_E \eqdef (c_E \mid\mid w_E)$ and $\widehat{c}_{F_1} \eqdef (c_{F_1} \mid\mid w_{F_1})$ in Lemma \ref{lem:addition_on_E__relation_of_auxiliary_matrix} are also linearly independent.
Hence, $\widehat{c}_E$ and $\widehat{c}_{F_1}$ form a basis of the kernel of $H$, therefore the vector $\widehat{c}_{F_2} \eqdef (c_{F_2} \mid\mid w_{F_2})$ lying in the kernel of $H$ as well must be a linear combination of $\widehat{c}_E$ and $\widehat{c}_{F_1}$.
Now there are the following two cases.
\begin{itemize}
\item
Suppose that $P_9 \neq P_k$ for any $1 \leq k \leq 8$.
As $E(P_9) = F_1(P_9) = 0$ by definition, we have $\mathbb{M}(P_9) \cdot {}^t c_E = \mathbb{M}(P_9) \cdot {}^t c_{F_1} = 0$, while $c_{F_2}$ is a linear combination of $c_E$ and $c_{F_1}$ as discussed above.
Hence it follows that $\mathbb{M}(P_9) \cdot {}^t c_{F_2} = 0$, therefore $F_2(P_9) = 0$.
This means that at least one of the three lines $\overline{R|-R}$, $\overline{P|Q}$, and $\overline{R \ast Q|-P}$ forming $F_2$ must pass through $P_9$.
Now by the assumption that $P_9$ is different from $P_1,\dots,P_8$, the former two lines cannot pass through $P_9$, therefore $\overline{R \ast Q|-P}$ must pass through $P_9$.
Moreover, as $P_9 \neq P_3 = -P$ and $P_9 \neq P_7 = R \ast Q$ by the assumption, we must have $P_9 = (R \ast Q) \ast (-P) = P_{10}$, as desired.
\item
Suppose that $P_9 = P_i$ with $1 \leq i \leq 8$.
Then the same argument as above implies that $F_2(P_9) = 0$.
Moreover, by \eqref{eq:tangent_line_by_inner_product__elliptic_curve}, the vector $(\mathbb{M}_X(P_9) \cdot {}^t c_E, \mathbb{M}_Y(P_9) \cdot {}^t c_E)$ is a scalar multiple of $(T_X(P_9),T_Y(P_9))$.
On the other hand, by the definition of $F_1$ and the assumption $P_9 = P_i$, it follows that $P_9$ is a multiple intersection point of $E$ and $F_1$, therefore Lemma \ref{lem:tangent_line_by_inner_product__three_lines} implies that $(\mathbb{M}_X(P_9) \cdot {}^t c_{F_1}, \mathbb{M}_Y(P_9) \cdot {}^t c_{F_1})$ is also a scalar multiple of $(T_X(P_9),T_Y(P_9))$.
Now as $c_{F_2}$ is a linear combination of $c_E$ and $c_{F_1}$ as discussed above, it follows that $(\mathbb{M}_X(P_9) \cdot {}^t c_{F_2}, \mathbb{M}_Y(P_9) \cdot {}^t c_{F_2})$ is also a scalar multiple of $(T_X(P_9),T_Y(P_9))$.
Therefore, by Lemma \ref{lem:tangent_line_by_inner_product__three_lines} again, $P_9 = P_i$ is a multiple intersection point of $E$ and $F_2$.
Now $P_i$ must coincide with some of $P_1,\dots,P_8,P_{10}$, while no three points among $P_1,\dots,P_9$ coincide by the current hypothesis, therefore we must have $P_9 = P_i = P_{10}$, as desired.
\end{itemize}
Hence we have proved Proposition \ref{prop:addition_on_E__multiplicity_less_than_three} by assuming Lemma \ref{lem:addition_on_E__auxiliary_matrix_is_row_full-rank} below.
We also note that, as the claim of Theorem \ref{thm:rational_point_group} is symmetric with respect to $P$ and $R$, and as changing the roles of $P$ and $R$ will switch $P_9$ and $P_{10}$, it follows that the claim also holds (by assuming Lemma \ref{lem:addition_on_E__auxiliary_matrix_is_row_full-rank} below) when no three points among $P_1,\dots,P_8,P_{10}$ coincide.

To conclude this section, we prove the following postponed lemma.

\begin{lemma}
\label{lem:addition_on_E__auxiliary_matrix_is_row_full-rank}
The matrix $H$ defined above has rank $8 + |J|$.
\end{lemma}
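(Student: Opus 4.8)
The plan is to prove the stronger statement that the $8+|J|$ rows of $H$ are linearly independent, which is equivalent to $H$ having rank $8+|J|$. So I would start from an arbitrary linear relation $\sum_{i\in I} a_i\, v[i] + \sum_{j\in J}\big(b_j\, v[j,X] + c_j\, v[j,Y]\big) = \vec 0$ among the rows and force every coefficient to vanish. The $|J|$ auxiliary columns are handled first: reading off Column $P_j$, where the only nonzero entries are $-T_X(P_j)$ in row $v[j,X]$ and $-T_Y(P_j)$ in row $v[j,Y]$, gives $b_j T_X(P_j) + c_j T_Y(P_j) = 0$ for each $j\in J$. Here $(T_X(P_j),T_Y(P_j)) \neq (0,0)$: otherwise the tangent line $\overline{P_j|P_j}$ would have coefficient vector proportional to $(0,0,1)$, i.e.\ it would be the line $Z=0$, which meets $E$ only at $O$, contradicting $P_j\neq O$ for $j\geq 2$. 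Hence $(b_j,c_j)$ is a scalar multiple $\mu_j\,(T_Y(P_j),-T_X(P_j))$, and it remains to show that $a_i=0$ for $i\in I$ and $\mu_j=0$ for $j\in J$.

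Next I would read off the block of the first ten columns, which says $\sum_{i\in I} a_i\,\mathbb{M}(P_i) + \sum_{j\in J}\big(b_j\,\mathbb{M}_X(P_j) + c_j\,\mathbb{M}_Y(P_j)\big) = \vec 0$ in $K^{10}$. Taking the inner product with $c_F$ for an arbitrary homogeneous cubic $F$ and using \eqref{eq:value_of_cubic_polynomial_by_inner_product} turns this into the single identity
\[
\sum_{i\in I} a_i\, F(P_i) + \sum_{j\in J} \mu_j\big(T_Y(P_j)\,(\partial_X F)(P_j) - T_X(P_j)\,(\partial_Y F)(P_j)\big) = 0,
\]
valid for every cubic $F$. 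The functional $D_j(F) \eqdef T_Y(P_j)(\partial_X F)(P_j) - T_X(P_j)(\partial_Y F)(P_j)$ is the derivative of $F$ at $P_j$ in the direction $(T_Y(P_j),-T_X(P_j),0)$, which is annihilated by the gradient of $E$ and hence tangent to $E$ at $P_j$. The key observation I would record is that if the tangent line $\overline{P_j|P_j}$ is a factor of $F$, or if two of the linear factors of $F$ pass through $P_j$, then $D_j(F)=0$: in the first case $((\partial_X F)(P_j),(\partial_Y F)(P_j))$ is proportional to $(T_X(P_j),T_Y(P_j))$, exactly as in the proof of Lemma \ref{lem:tangent_line_by_inner_product__three_lines}, and in the second case both partials vanish.

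Finally, to force each coefficient to vanish I would construct, for each target index, a \emph{witness} cubic $F$ that is a product of three lines isolating that coefficient in the displayed identity. To kill $a_{i_0}$ for a fixed $i_0\in I$, I want $F$ to vanish at every other $P_i$ ($i\in I$), to satisfy $D_j(F)=0$ at every double point (by including the tangent line $\overline{P_j|P_j}$ as a factor or routing two factors through $P_j$), and yet to have $F(P_{i_0})\neq 0$; the identity then reads $a_{i_0}F(P_{i_0})=0$. Analogously, to kill $\mu_{j_0}$ I want $F$ to vanish at all the $P_i$ and to be tangent at every double point $P_j$ with $j\neq j_0$ while meeting $E$ transversally at $P_{j_0}$, so that $D_{j_0}(F)\neq 0$. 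Such products of lines can be assembled from the rich collinearity of the configuration --- the six collinear triples $\{O,P,-P\}$, $\{R,Q,R\ast Q\}$, $\{-R,P\ast Q,P_9\}$, $\{O,R,-R\}$, $\{P,Q,P\ast Q\}$, $\{-P,R\ast Q,P_{10}\}$ arising as the factors of $F_1$ and $F_2$ --- together with the tangent lines at the double points; and the smoothness (hence irreducibility) of $E$, which forces any line to meet $E$ in at most three points, is what both supplies these lines and prevents the witness from accidentally vanishing at its target. The main obstacle, and the part demanding the most care, is carrying out this construction uniformly across the coincidence patterns still permitted by Table \ref{tab:addition_on_elliptic_curve__equality_for_points}: in each pattern one must check that three lines suffice to cover all the non-target points with the correct simple-or-tangential multiplicity while avoiding the target, so this last step is where a short but genuine case analysis is unavoidable.
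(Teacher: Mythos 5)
Your strategy is genuinely different from the paper's: the paper proves Lemma \ref{lem:addition_on_E__auxiliary_matrix_is_row_full-rank} by explicit Gaussian elimination of a chosen $(8+|J|)\times(8+|J|)$ submatrix, using the coordinate identities coming from $P_3=-P_2$, $P_5=-P_4$ and $E(P_i)=0$ to shrink the matrix down to a $2\times2$ or $3\times3$ block whose invertibility is a non-collinearity statement; you instead dualize a putative row relation into the functional identity $\sum_{i\in I}a_iF(P_i)+\sum_{j\in J}\mu_jD_j(F)=0$ for all cubics $F$ and propose to kill each coefficient with a witness cubic built from three lines. The setup of your argument is sound: the extraction of $(b_j,c_j)=\mu_j(T_Y(P_j),-T_X(P_j))$ from Column $P_j$ is correct (and your justification that $(T_X(P_j),T_Y(P_j))\neq(0,0)$ via the line $Z=0$ meeting $E$ only at $O$ is right), and your criterion for $D_j(F)=0$ is exactly the equivalence of Lemma \ref{lem:tangent_line_by_inner_product__three_lines}.

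There are, however, two gaps. First, your recipe for killing $a_{i_0}$ is internally inconsistent precisely when $P_{i_0}$ is itself a double point, i.e.\ when there is $j\in J$ with $P_j=P_{i_0}$ (e.g.\ $i_0=2$, $j=3$ in the case $P=-P$): you demand $F(P_{i_0})\neq 0$ while simultaneously enforcing $D_j(F)=0$ \emph{by including the tangent at $P_j$ as a factor or routing two factors through $P_j$}, but either of those forces $F(P_j)=F(P_{i_0})=0$. (And if no factor of $F$ passes through $P_j$, Lemma \ref{lem:tangent_line_by_inner_product__three_lines} no longer controls $D_j(F)$ at all.) The fix is to reorder: first kill every $\mu_{j}$ using witnesses that vanish at all of $P_1,\dots,P_8$ and are transversal at $P_{j}$ only, after which the identity collapses to $\sum_i a_iF(P_i)=0$ and the $a_i$ can be killed with purely interpolatory witnesses carrying no derivative constraints; but as written your plan does not do this. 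Second, and more substantially, the entire burden of the lemma sits in the witness construction you defer to ``a short but genuine case analysis'': one must actually exhibit, for each of the $8+|J|$ targets and each coincidence pattern still allowed by Table \ref{tab:addition_on_elliptic_curve__equality_for_points} (up to three simultaneous double points), a product of three lines with the prescribed vanishing/tangency behaviour that misses the target --- e.g.\ for the target $P_1=O$ one is forced to separate $P$ from $-P$ and $R$ from $-R$ onto different lines, and the existence of a valid cover such as $\overline{P|Q}\cdot\overline{R|Q}\cdot\overline{-P|-R}$ depends on the exclusions of Table \ref{tab:addition_on_elliptic_curve__equality_for_points}. I believe these witnesses do exist in every admissible case, so the route is viable, but until that case analysis is carried out the proof is not complete; its length and delicacy are comparable to the row reduction it would replace.
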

\begin{proof}
It suffices to prove that the square submatrix of size $8 + |J|$ obtained from $H$ by removing Column $X^3$ and Column $X Y^2$ is invertible.
We are going to reduce this matrix and decrease the matrix size by elementary row (or sometimes column) transformations.
Here we frequently use the equality $E(P_i) = 0$ for each $i$ and the following consequences of the relations $P_3 = -P_2$ and $P_5 = -P_4$; we have $\alpha_3 = \alpha_2$, $\alpha_5 = \alpha_4$, and
\[
\beta_k + \beta_{k+1} = - a_1 \alpha_k - a_3 \mbox{ and }
\beta_k \beta_{k+1} = - (\alpha_k^3 + a_2 \alpha_k^2 + a_4 \alpha_k + a_6)
\mbox{ for } k = 2,4 \enspace.
\]
First, by observing that $v[1][Y^3] = 1$ and the other columns of $v[1]$ are zero, we remove Column $Y^3$ from the matrix by using reduction by $v[1]$ and then remove the row $v[1]$.

Next, we remove Column $Z^3$ of vectors other than $v[2]$.
For $j \in J$, Columns $Z^3$ of $v[j,X]$ and $v[j,Y]$ are already zero.
For $i = 2,4 \in I$, we have the two cases.
\begin{itemize}
\item
When $i + 1 \in I$, we reduce $v[i+1]$ by $v[i]$.
Now by the definition of $I$ and the fact $i + 1 \in I$, it follows that $P_{i+1} \neq P_i$, while $\alpha_{i+1} = \alpha_i$, therefore $\beta_{i+1} \neq \beta_i$.
Based on this, the vector after reduction can be divided by $\beta_{i+1} - \beta_i \neq 0$ to yield the following vector (we omit the additional columns associated to indices in $J$ as those components are not changed; we also do similarly in the following).
\[
v[i+1] \leftarrow
(\underbrace{\alpha_i^2}_{X^2 Y},
\underbrace{0}_{X^2 Z},
\underbrace{0}_{X Z^2},
\underbrace{\beta_{i+1} + \beta_i}_{Y^2 Z},
\underbrace{1}_{Y Z^2},
\underbrace{\alpha_i}_{X Y Z}) \enspace.
\]
\item
When $i + 1 \in J$, the definition of $I$ and $J$ implies that $P_{i+1} = P_2$ or $P_{i+1} = P_4$; by Table \ref{tab:addition_on_elliptic_curve__equality_for_points}, we in fact have $P_{i+1} = P_i$.
Now we have $- P_i = P_{i+1} = P_i$, therefore $T_Y(P_{i+1}) = 0$; and as $P_i = P_{i+1} \neq O$, we have $T_X(P_{i+1}) \neq 0$.
This implies that currently Column $P_{i+1}$ of each vector other than $v[i+1,X]$ is zero, while $v[i+1,X][P_{i+1}] = -T_X(P_{i+1}) \neq 0$.
Therefore, we can remove the row $v[i+1,X]$ by elementary column transformations using Column $P_{i+1}$, and then remove the Column $P_{i+1}$.
By this operation, $v[i+1,Y]$ becomes
\[
v[i+1,Y] \leftarrow
(\underbrace{\alpha_{i+1}^2}_{X^2 Y},
\underbrace{0}_{X^2 Z},
\underbrace{0}_{X Z^2},
\underbrace{2 \beta_{i+1}}_{Y^2 Z},
\underbrace{1}_{Y Z^2},
\underbrace{\alpha_{i+1}}_{X Y Z})
\]
(note that the omitted components are all zero as Column $P_{i+1}$ has been removed), which is (as now $P_{i+1} = P_i$) equal to the vector $v[i+1]$ obtained in the previous case $i + 1 \in I$.
Based on this, we rewrite the $v[i+1,Y]$ as $v[i+1]$ and move $i + 1$ from $J$ to $I$, which unifies the argument to the previous case where $i + 1 \in I$.
\end{itemize}
By the latter argument above, we may assume without loss of generality that $\{1,2,3,4,5\} \subseteq I$ and $J \subseteq \{6,7,8\}$.
Now we reduce the remaining rows $v[i]$ with $i \in I \setminus \{1,2,3,5\}$ by $v[2]$ (and then remove the row $v[2]$) to obtain
\[
v[i] \leftarrow
(\underbrace{\alpha_i^2 \beta_i - \alpha_2^2 \beta_2}_{X^2 Y},
\underbrace{\alpha_i^2 - \alpha_2^2}_{X^2 Z},
\underbrace{\alpha_i - \alpha_2}_{X Z^2},
\underbrace{\beta_i^2 - \beta_2^2}_{Y^2 Z},
\underbrace{\beta_i - \beta_2}_{Y Z^2},
\underbrace{\alpha_i \beta_i - \alpha_2 \beta_2}_{X Y Z}) \enspace.
\]

Next, we remove Column $Y Z^2$ by reduction using $v[3]$ (and then remove the row $v[3]$).
For $i \in I \setminus \{1,2,3,5\}$, the reduction yields
\[
v[i] \leftarrow
(\underbrace{(\alpha_i^2 - \alpha_2^2) \beta_i}_{X^2 Y},
\underbrace{\alpha_i^2 - \alpha_2^2}_{X^2 Z},
\underbrace{\alpha_i - \alpha_2}_{X Z^2},
\underbrace{(\beta_i - \beta_2)(\beta_i - \beta_3)}_{Y^2 Z},
\underbrace{(\alpha_i - \alpha_2) \beta_i}_{X Y Z}) \enspace.
\]
Now its Column $Y^2 Z$ is equal to $\alpha_i^3 - \alpha_2^3 + a_2 (\alpha_i^2 - \alpha_2^2) + a_4 (\alpha_i - \alpha_2) - a_1 (\alpha_i - \alpha_2) \beta_i$.
Moreover, if $i = 4$ then we have $P_i \neq P_2,P_3$ by Table \ref{tab:addition_on_elliptic_curve__equality_for_points}; while if $i \geq 6$ then we also have $P_i \neq P_2,P_3$ by the definition of $I$.
Hence $\alpha_i \neq \alpha_2$ in any case; now the vector above can be divided by $\alpha_i - \alpha_2 \neq 0$ to yield
\[
v[i] \leftarrow
(\underbrace{(\alpha_i + \alpha_2) \beta_i}_{X^2 Y},
\underbrace{\alpha_i + \alpha_2}_{X^2 Z},
\underbrace{1}_{X Z^2},
\underbrace{\alpha_i^2 + \alpha_i \alpha_2 + \alpha_2^2 + a_2 (\alpha_i + \alpha_2) + a_4 - a_1 \beta_i}_{Y^2 Z},
\underbrace{\beta_i}_{X Y Z}) \enspace.
\]
For the reduction of $v[5]$, note that $P_2 \neq \pm P_4$ by Table \ref{tab:addition_on_elliptic_curve__equality_for_points}, therefore $\alpha_2 \neq \alpha_4$.
Based on this, the resulting vector of the reduction can be divided by $\alpha_4 - \alpha_2 \neq 0$ to yield
\[
v[5] \leftarrow
(\underbrace{\alpha_4 + \alpha_2}_{X^2 Y},
\underbrace{0}_{X^2 Z},
\underbrace{0}_{X Z^2},
\underbrace{- a_1}_{Y^2 Z},
\underbrace{1}_{X Y Z}) \enspace.
\]
Moreover, for $j \in J$, Column $Y Z^2$ of $v[j,X]$ is already zero, while $v[j,Y]$ is reduced as follows, where we used the relation $T_Y(P_j) = - 2 \beta_j - a_1 \alpha_j - a_3$:
\[
v[j,Y] \leftarrow
(\underbrace{\alpha_j^2 - \alpha_2^2}_{X^2 Y},
\underbrace{0}_{X^2 Z},
\underbrace{0}_{X Z^2},
\underbrace{-T_Y(P_j) - a_1 (\alpha_j - \alpha_2)}_{Y^2 Z},
\underbrace{\alpha_j - \alpha_2}_{X Y Z}) \enspace.
\]

Next, by reduction using $v[5]$, we remove Column $X Y Z$ of the remaining vectors $v[i]$ with $i \in I \setminus \{1,2,3,5\}$ and $v[j,X]$ and $v[j,Y]$ with $j \in J$ (and then remove $v[5]$):
\[
\begin{split}
v[i] \leftarrow{}&
(\underbrace{(\alpha_i - \alpha_4) \beta_i}_{X^2 Y},
\underbrace{\alpha_i + \alpha_2}_{X^2 Z},
\underbrace{1}_{X Z^2},
\underbrace{\alpha_i^2 + \alpha_i \alpha_2 + \alpha_2^2 + a_2 (\alpha_i + \alpha_2) + a_4}_{Y^2 Z}) \enspace,\\
v[j,X] \leftarrow{}&
(\underbrace{(2 \alpha_j - \alpha_4 - \alpha_2) \beta_j}_{X^2 Y},
\underbrace{2 \alpha_j}_{X^2 Z},
\underbrace{1}_{X Z^2},
\underbrace{a_1 \beta_j}_{Y^2 Z}) \enspace,\\
v[j,Y] \leftarrow{}&
(\underbrace{(\alpha_j - \alpha_4) (\alpha_j - \alpha_2)}_{X^2 Y},
\underbrace{0}_{X^2 Z},
\underbrace{0}_{X Z^2},
\underbrace{- T_Y(P_j)}_{Y^2 Z}) \enspace.
\end{split}
\]

Next, by reduction using $v[4]$, we remove Column $X Z^2$ of the remaining vectors (and then remove $v[4]$).
For $i \in I \setminus \{1,2,3,4,5\}$, the definition of $I$ implies that $P_i \neq P_4,P_5$, therefore $\alpha_i \neq \alpha_4$.
Based on this, the resulting vector of the reduction can be divided by $\alpha_i - \alpha_4 \neq 0$ to yield
\[
v[i] \leftarrow
(\underbrace{\beta_i}_{X^2 Y},
\underbrace{1}_{X^2 Z},
\underbrace{\alpha_i + \alpha_4 + \alpha_2 + a_2}_{Y^2 Z}) \enspace.
\]
For $j \in J$, Column $X Z^2$ of $v[j,Y]$ is already zero, while $v[j,X]$ is reduced as follows, where we used the relation $T_X(P_j) = 3 \alpha_j^2 + 2 a_2 \alpha_j + a_4 - a_1 \beta_j$:
\[
\begin{split}
v[j,X] \leftarrow{}&
(\underbrace{(2 \alpha_j - \alpha_4 - \alpha_2) \beta_j}_{X^2 Y},
\underbrace{2 \alpha_j - \alpha_4 - \alpha_2}_{X^2 Z}, \\
&\quad
\underbrace{- T_X(P_j) + a_2 (2 \alpha_j - \alpha_4 - \alpha_2) + 3 \alpha_j^2 - (\alpha_4^2 + \alpha_4 \alpha_2 + \alpha_2^2)}_{Y^2 Z}) \enspace.
\end{split}
\]

Now the matrix size has become $3 + |J|$.
For $j \in J$, we have $v[j,X][P_j] = -T_X(P_j)$ and $v[j,Y][P_j] = -T_Y(P_j)$, while Column $P_j$ of the remaining vectors are zero.
Based on this, by subtracting Column $P_j$ from Column $Y^2 Z$, the vectors except for $v[j,X]$ and $v[j,Y]$ are not changed, while $v[j,X]$ and $v[j,Y]$ become
\[
\begin{split}
v[j,X] \leftarrow{}&
(\underbrace{(2 \alpha_j - \alpha_4 - \alpha_2) \beta_j}_{X^2 Y},
\underbrace{2 \alpha_j - \alpha_4 - \alpha_2}_{X^2 Z}, \\
&\quad
\underbrace{a_2 (2 \alpha_j - \alpha_4 - \alpha_2) + 3 \alpha_j^2 - (\alpha_4^2 + \alpha_4 \alpha_2 + \alpha_2^2)}_{Y^2 Z}) \enspace,\\
v[j,Y] \leftarrow{}&
(\underbrace{(\alpha_j - \alpha_4) (\alpha_j - \alpha_2)}_{X^2 Y},
\underbrace{0}_{X^2 Z},
\underbrace{0}_{Y^2 Z}) \enspace.
\end{split}
\]
Now for $k = 2,4$, if some indices $j \in J$ and $k' \in \{k,k+1\}$ satisfy $P_j = P_{k'}$, then by the hypothesis that no three points among $P_1,\dots,P_9$ coincide, it follows that $P_k \neq P_{k+1}$, therefore $T_Y(P_k) \neq 0$ and $T_Y(P_{k+1}) \neq 0$.
This implies that $\alpha_j = \alpha_k$ and $T_Y(P_j) \neq 0$, and now $v[j,Y][P_j] \neq 0$ and the other columns of $v[j,Y]$ are zero.
Hence an elementary row transformation using $v[j,Y]$ can remove Column $P_j$ in $v[j,X]$, and by dividing the resulting vector by $\pm (\alpha_2 - \alpha_4) \neq 0$ we obtain
\[
v[j,X] \leftarrow
(\underbrace{\beta_j}_{X^2 Y},
\underbrace{1}_{X^2 Z},
\underbrace{\alpha_j + \alpha_4 + \alpha_2 + a_2}_{Y^2 Z}) \enspace.
\]
On the other hand, for $j \in J$ and $i \in \{6,7,8\} \setminus \{j\}$, if $P_j = P_i$, then we have $i \in I$ by the definition of $I$, and now the reduction of $v[j,X]$ by $v[i]$ yields
\[
v[j,X] \leftarrow
(\underbrace{0}_{X^2 Y},
\underbrace{0}_{X^2 Z},
\underbrace{(\alpha_j - \alpha_4) (\alpha_j - \alpha_2)}_{Y^2 Z}) \enspace.
\]
Based on these arguments, we perform a case-by-case analysis.
Here we set $I' = I \setminus \{1,2,3,4,5\}$ and put $J' = \{j \in J \mid P_i = P_j \mbox{ for some } i \in I' \setminus \{j\} \}$.
Note that now $J \subseteq \{6,7,8\}$ and $|J'| \leq 1$.
\begin{itemize}
\item
When $J' = \emptyset$, the argument above implies that regardless of which of $I$ and $J$ each $i \in \{6,7,8\}$ belongs to, the current matrix is (by reordering the rows) as in the left-hand side of the expression below.
This is changed to the right-hand side by elementary row transformations for removing the second column:
\[
\begin{pmatrix}
\beta_6 \ \ & 1 \ \ & \alpha_6 + \alpha_4 + \alpha_2 + a_2 \\
\beta_7 \ \ & 1 \ \ & \alpha_7 + \alpha_4 + \alpha_2 + a_2 \\
\beta_8 \ \ & 1 \ \ & \alpha_8 + \alpha_4 + \alpha_2 + a_2
\end{pmatrix}
\to
\begin{pmatrix}
\beta_7 - \beta_6 \ \ & \alpha_7 - \alpha_6 \\
\beta_8 - \beta_6 \ \ & \alpha_8 - \alpha_6
\end{pmatrix}
\enspace.
\]
As $J' = \emptyset$, the three points $P_6 = Q$, $P_7 = P \ast Q$, and $P_8 = R \ast Q$ are all distinct.
If these three points are collinear, then we must have $P_6 \ast P_7 = P_8$, which implies that $P = R \ast Q$ and contradicts Table \ref{tab:addition_on_elliptic_curve__equality_for_points}.
Hence $P_6$, $P_7$, and $P_8$ are not collinear, which implies that the final matrix above is invertible, as desired.
\item
When $J' = \{j\}$ with some $j$, we have $j = 6$ as $P_7 \neq P_8$ by Table \ref{tab:addition_on_elliptic_curve__equality_for_points}.
Choose $i$ and $k$ in a way that $\{7,8\} = \{i,k\}$ and $P_6 = P_k$.
Then we have $P_i \neq P_6$ by the hypothesis that no three points among $P_1,\dots,P_9$ coincide.
Hence we can apply the argument above to show that the current matrix is (by reordering the rows) as follows:
\[
\begin{pmatrix}
\beta_6 \ \ & 1 \ \ & \alpha_6 + \alpha_4 + \alpha_2 + a_2 \ \ & 0 \\
\beta_i \ \ & 1 \ \ & \alpha_i + \alpha_4 + \alpha_2 + a_2 \ \ & 0 \\
0 \ \ & 0 \ \ & (\alpha_6 - \alpha_4) (\alpha_6 - \alpha_2) \ \ & -T_X(P_6) \\
(\alpha_6 - \alpha_4) (\alpha_6 - \alpha_2) \ \ & 0 \ \ & 0 \ \ & -T_Y(P_6)
\end{pmatrix}
\enspace.
\]
By elementary row transformations to remove the second column, the result is
\[
\begin{pmatrix}
\beta_i - \beta_6 \ \ & \alpha_i - \alpha_6 \ \ & 0 \\
0 \ \ & (\alpha_6 - \alpha_4) (\alpha_6 - \alpha_2) \ \ & -T_X(P_6) \\
(\alpha_6 - \alpha_4) (\alpha_6 - \alpha_2) \ \ & 0 \ \ & -T_Y(P_6)
\end{pmatrix}
\]
with determinant $- (\alpha_6 - \alpha_4) (\alpha_6 - \alpha_2) \cdot ( (\beta_i - \beta_6) T_Y(P_6) + (\alpha_i - \alpha_6) T_X(P_6) )$.
Assume for the contrary that this value is zero.
As $6 \in J'$, $P_6$ must be different from $P_2,\dots,P_5$, therefore $\alpha_6 \neq \alpha_2,\alpha_4$.
Hence by dividing the determinant by $\alpha_6 - \alpha_4$ and $\alpha_6 - \alpha_2$, we have $(\beta_i - \beta_6) T_Y(P_6) + (\alpha_i - \alpha_6) T_X(P_6) = 0$, therefore
\[
T_X(P_6) \cdot \alpha_i + T_Y(P_6) \cdot \beta_i + T_Z(P_6)
= T_X(P_6) \cdot \alpha_6 + T_Y(P_6) \cdot \beta_6 + T_Z(P_6)
= 0 \enspace.
\]
This means that the point $P_i$ is on the line $\overline{P_6|P_6} = \overline{Q|Q}$.
Now if $(k,i) = (7,8)$, then the relation $Q = P_6 = P_7 = P \ast Q$ here implies that $P \neq Q$ is also a point on $\overline{Q|Q}$, therefore we have $P_8 = R \ast Q = P$, contradicting Table \ref{tab:addition_on_elliptic_curve__equality_for_points}.
Similarly, if $(k,i) = (8,7)$, then the relation $Q = P_6 = P_8 = R \ast Q$ here implies that $R \neq Q$ is also a point on $\overline{Q|Q}$, therefore we have $P_7 = P \ast Q = R$, contradicting Table \ref{tab:addition_on_elliptic_curve__equality_for_points} again.
Hence the final matrix above is invertible, as desired.
\end{itemize}
This concludes the proof of Lemma \ref{lem:addition_on_E__auxiliary_matrix_is_row_full-rank}.
\end{proof}

\section{Proof: The Remaining Cases}
\label{sec:many_coincidence}

By the argument above, it has been proved that the claim $P_9 = P_{10}$ holds whenever either no three points among $P_1,\dots,P_8,P_9$ coincide, or no three points among $P_1,\dots,P_8,P_{10}$ coincide.
From now, we consider the remaining case.
By Table \ref{tab:addition_on_elliptic_curve__equality_for_points}, the possibilities for some three points among $P_1,\dots,P_8,P_9$ or among $P_1,\dots,P_8,P_{10}$ being equal are only the following six cases: $P_2 = P_6 = P_7$; $P_3 = P_8 = P_9$; $P_3 = P_8 = P_{10}$; $P_4 = P_6 = P_8$; $P_5 = P_7 = P_9$; and $P_5 = P_7 = P_{10}$.

\begin{lemma}
\label{lem:addition_on_E__3-8-9}
If $P_3 = P_8 = P_9$ or $P_5 = P_7 = P_{10}$, then $P_9 = P_{10}$.
\end{lemma}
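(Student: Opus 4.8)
The plan is to use the $P \leftrightarrow R$ symmetry of Theorem \ref{thm:rational_point_group} to reduce to the single case $P_3 = P_8 = P_9$, and then to prove the resulting equality by a \emph{bootstrapping} argument: I will apply the already-established Proposition \ref{prop:addition_on_E__multiplicity_less_than_three} (valid whenever no three points coincide) to a cleverly chosen auxiliary triple, thereby converting the present multiplicity-three situation into a multiplicity-two one. Since swapping $P$ and $R$ interchanges $(P_2,P_3,P_7,P_9)$ with $(P_4,P_5,P_8,P_{10})$, the hypothesis $P_5 = P_7 = P_{10}$ is carried to $P_3 = P_8 = P_9$ and the conclusion $P_9 = P_{10}$ is preserved, so it suffices to treat $P_3 = P_8 = P_9$.

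First I would unwind the hypotheses using only \eqref{eq:cancellation_property}. From $P_3 = P_8$, that is $R \ast Q = -P$, the cancellation law gives $(-P) \ast Q = R$ and hence, via $-(X \ast Y) = (-X) \ast (-Y)$, the identity $-R = P \ast (-Q)$. Substituting $R \ast Q = -P$ into $P_{10} = (R \ast Q) \ast (-P)$ shows $P_{10} = (-P) \ast (-P)$, while $P_9 = -P$ by hypothesis. Thus the whole lemma becomes the single assertion that $(-P) \ast (-P) = -P$, i.e.\ that $-P$ is a point of inflection of $E$.

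The heart of the argument is to realize this inflection as an instance of associativity for the auxiliary triple $(P', Q', R') \eqdef (P \ast Q,\, -R,\, P)$. Writing $\widetilde{P}_1, \dots, \widetilde{P}_{10}$ for the ten points attached to this triple exactly as in Section \ref{appendix:sec:proof_thm:rational_point_group__point_coincidence}, a direct computation with \eqref{eq:cancellation_property} and the hypothesis yields $\widetilde{P}_5 = -R' = -P$ and $\widetilde{P}_7 = (P \ast Q) \ast (-R) = P_9 = -P$, while $R' \ast Q' = P \ast (-R) = P \ast (P \ast (-Q)) = -Q$ gives $\widetilde{P}_{10} = (R' \ast Q') \ast (-P') = (-Q) \ast ((-P) \ast (-Q)) = -P$; moreover $\widetilde{P}_9 = \widetilde{P}_7 \ast \widetilde{P}_5 = (-P) \ast (-P)$. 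Hence the conclusion $\widetilde{P}_9 = \widetilde{P}_{10}$ of Theorem \ref{thm:rational_point_group} for this triple is \emph{precisely} the desired equality $(-P) \ast (-P) = -P$. Now I would argue by contradiction: assume $(-P) \ast (-P) \neq -P$, so that $\widetilde{P}_9 \neq -P$. Then the only repetition forced among $\widetilde{P}_1, \dots, \widetilde{P}_9$ is the double coincidence $\widetilde{P}_5 = \widetilde{P}_7 = -P$; in particular no three of $\widetilde{P}_1, \dots, \widetilde{P}_9$ are equal, so Proposition \ref{prop:addition_on_E__multiplicity_less_than_three} applies to the auxiliary triple and yields $\widetilde{P}_9 = \widetilde{P}_{10} = -P$, contradicting $\widetilde{P}_9 \neq -P$. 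Therefore $(-P) \ast (-P) = -P$ and $P_9 = P_{10}$.

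The main obstacle is justifying the clause ``no three of $\widetilde{P}_1,\dots,\widetilde{P}_9$ are equal'' needed to invoke Proposition \ref{prop:addition_on_E__multiplicity_less_than_three}: I must verify that, granting $(-P)\ast(-P) \neq -P$ together with the non-degeneracies already recorded in Table \ref{tab:addition_on_elliptic_curve__equality_for_points}, no \emph{further} triple coincidence can occur among the auxiliary points. This is a finite but delicate case analysis in which each potential coincidence of the $\widetilde{P}_i$ (for instance $\widetilde{P}_2 = P \ast Q = -P$, or $\widetilde{P}_3 = -(P\ast Q) = -P$) is translated back through \eqref{eq:negation_formula} and \eqref{eq:cancellation_property} into a relation among $P,Q,R$ that is either forbidden by Table \ref{tab:addition_on_elliptic_curve__equality_for_points} or already places the auxiliary triple into one of the obvious cases of Section \ref{appendix:sec:proof_thm:rational_point_group__point_coincidence}, so that $\widetilde{P}_9 = \widetilde{P}_{10}$ holds outright. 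Checking that this routing is exhaustive is exactly the ``tricky combinatorial'' step that turns the multiplicity-three configuration into a solvable multiplicity-two one, and it is where the bulk of the bookkeeping will lie.
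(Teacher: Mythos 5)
Your reduction of the case $P_3=P_8=P_9$ to the single identity $(-P)\ast(-P)=-P$ is correct, and the auxiliary triple $(P',Q',R')=(P\ast Q,-R,P)$ is well chosen: the computations $\widetilde{P}_5=\widetilde{P}_7=-P$, $\widetilde{P}_8=-Q$, $\widetilde{P}_{10}=-P$, $\widetilde{P}_9=(-P)\ast(-P)$ all check out. However, the proof has a genuine gap exactly where you locate ``the bulk of the bookkeeping,'' and your description of how that gap would be filled is not accurate. The inference in your second paragraph from ``the only repetition \emph{forced} is $\widetilde{P}_5=\widetilde{P}_7$'' to ``in particular no three of $\widetilde{P}_1,\dots,\widetilde{P}_9$ are equal'' is a non sequitur: other coincidences are possible even though not forced. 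More importantly, your claim that every residual triple coincidence is ``either forbidden by Table \ref{tab:addition_on_elliptic_curve__equality_for_points} or already places the auxiliary triple into one of the obvious cases'' is false. The coincidence $\widetilde{P}_3=\widetilde{P}_8=\widetilde{P}_9$, which occurs when $P\ast Q=Q$ (an empty cell of the table) together with $(-P)\ast(-P)=-Q$, is neither excluded by the table nor an obvious case; worse, it has precisely the ``$3$--$8$--$9$'' shape of the lemma you are proving, so routing it back through the same machinery is circular. It can in fact be killed by hand (from $P\ast Q=Q$ and $(-P)\ast(-P)=-Q$ one derives $P\ast P=Q$ and $Q\ast Q=P$, hence $P=Q$ and $(-P)\ast(-P)=-P$, contradicting your standing assumption), but that argument is nowhere in your proposal, and without it the proof does not close.

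For comparison, the paper avoids the auxiliary triple entirely here. It uses the remark recorded immediately after Proposition \ref{prop:addition_on_E__multiplicity_less_than_three}: by the $P\leftrightarrow R$ symmetry, $P_9=P_{10}$ already holds whenever no three points among $P_1,\dots,P_8,P_{10}$ coincide. Since the hypothesis $P_3=P_8=P_9$ constrains only the list ending in $P_9$, one need only examine the three table-consistent triple coincidences in the list ending in $P_{10}$ (namely $P_2=P_6=P_7$, $P_3=P_8=P_{10}$, $P_5=P_7=P_{10}$), each of which is dispatched in a line or two via \eqref{eq:cancellation_property}. Your bootstrapping-through-an-auxiliary-triple device is the right tool for Lemmas \ref{lem:addition_on_E__2-6-7_4-5} and \ref{lem:addition_on_E__2-6-7}, but for the present lemma it replaces a short three-case check with a longer and partly circular one; if you wish to keep your route, you must actually carry out and correct the exhaustive coincidence analysis for the $\widetilde{P}_i$.
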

\begin{proof}
As the case $P_5 = P_7 = P_{10}$ is obtained from $P_3 = P_8 = P_9$ by exchanging $P$ and $R$, it suffices by the symmetry of the claim to consider the case $P_3 = P_8 = P_9$, i.e., $-P = R \ast Q = (P \ast Q) \ast (-R)$.
By the argument above, the claim holds if no three points among $P_1,\dots,P_8,P_{10}$ coincide.
Therefore, it suffices to consider the case where some three points among $P_1,\dots,P_8,P_{10}$ coincide; such possibilities consistent (by Table \ref{tab:addition_on_elliptic_curve__equality_for_points}) with the hypothesis $P_3 = P_8 = P_9$ are: $P_2 = P_6 = P_7$; $P_3 = P_8 = P_{10}$; and $P_5 = P_7 = P_{10}$.
In the second case, we have $P_9 = P_8 = P_{10}$.
In the first case, i.e., $P = Q = P \ast Q$, we have $-P = P_3 = P_8 = R \ast Q = R \ast P$, therefore $R = (-P) \ast P = O$, contradicting Table \ref{tab:addition_on_elliptic_curve__equality_for_points}.
In the third case, i.e., $-R = P \ast Q = (R \ast Q) \ast (-P)$, we have $-P = P_3 = P_9 = (P \ast Q) \ast (-R) = (-R) \ast (-R)$ and $-R = (R \ast Q) \ast (-P) = (-P) \ast (-P)$, therefore $P_9 = P_3 = -P = (-P) \ast (-R) = (-R) = P_5 = P_{10}$.
Hence the claim holds.
\end{proof}

\begin{lemma}
\label{lem:addition_on_E__3-8-10_not_5-7-9}
If $P_3 = P_8 = P_{10}$ holds and $P_5 = P_7 = P_9$ does not hold, then $P_9 = P_{10}$.
\end{lemma}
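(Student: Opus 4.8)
The plan is to exploit the hypothesis $P_3 = P_8 = P_{10}$, which by the definitions of the points reads $-P = R \ast Q = (R \ast Q) \ast (-P)$, so that $P_{10} = P_3 = -P$ is pinned down, and then to force $P_9$ to equal $-P$ as well. Since a triple coincidence already occurs among $P_1,\dots,P_8,P_{10}$, the symmetric form of Proposition \ref{prop:addition_on_E__multiplicity_less_than_three} is unavailable; instead I would apply the proposition in its original orientation, which yields $P_9 = P_{10}$ as soon as no three of $P_1,\dots,P_9$ coincide. Hence it suffices to treat the residual situation in which some three of $P_1,\dots,P_9$ do coincide.

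For that residual situation I would invoke the classification of triple coincidences recorded at the start of this section: among $P_1,\dots,P_9$ the only possible triples are $P_2 = P_6 = P_7$, $P_3 = P_8 = P_9$, $P_4 = P_6 = P_8$, and $P_5 = P_7 = P_9$. The last is excluded outright by the standing hypothesis, leaving three cases to dispatch, each taken together with $P_3 = P_8 = P_{10}$. The case $P_3 = P_8 = P_9$ is immediate, since then $P_9 = P_3 = P_{10}$ (it is also subsumed by Lemma \ref{lem:addition_on_E__3-8-9}). The case $P_4 = P_6 = P_8$ collapses too: combining $P_4 = P_8$ with $P_3 = P_8$ gives $P_3 = P_4$, i.e. $P = -R$, which is a numbered cell of Table \ref{tab:addition_on_elliptic_curve__equality_for_points} (Case \ref{item:obvious__P_-R}) and hence already excluded.

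The one genuinely computational case is $P_2 = P_6 = P_7$, i.e. $P = Q = P \ast Q$. Here I would substitute $Q = P$ into $-P = R \ast Q$ to obtain $R \ast P = -P$, and then apply the cancellation law $(X \ast Y) \ast X = Y$ from \eqref{eq:cancellation_property} twice: first $(R \ast P) \ast R = P$ gives $(-P) \ast R = P$, and then $\bigl((-P) \ast R\bigr) \ast (-P) = R$ gives $P \ast (-P) = R$; since $P \ast (-P) = O$ this forces $R = O$, contradicting $P_4 \neq P_1$ in Table \ref{tab:addition_on_elliptic_curve__equality_for_points}. With all three residual cases either resolved or contradicted, $P_9 = P_{10}$ follows. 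The only delicate points I anticipate are selecting the correct (non-symmetric) orientation of the proposition, so that the triple already present on the $P_{10}$ side is not treated as an obstruction, and keeping the chain of cancellation identities in the $P_2 = P_6 = P_7$ case correctly oriented; both become routine once the bookkeeping of which triples survive the hypothesis is set up.
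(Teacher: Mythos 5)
Your proposal is correct and follows essentially the same route as the paper: reduce via Proposition \ref{prop:addition_on_E__multiplicity_less_than_three} (in its original orientation) to the residual triples among $P_1,\dots,P_9$, note that $P_4=P_6=P_8$ is incompatible with the hypothesis and that $P_3=P_8=P_9$ is immediate, and derive $R=O$ in the case $P_2=P_6=P_7$ (the paper gets there in one cancellation, $R=(R\ast Q)\ast Q=(-P)\ast P=O$, rather than your two, but the argument is the same). No gaps.
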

\begin{proof}
Similarly to the proof of Lemma \ref{lem:addition_on_E__3-8-9}, it suffices to consider the case where some three points among $P_1,\dots,P_8,P_9$ coincide; such possibilities consistent with the hypothesis $P_3 = P_8 = P_{10}$ (except for $P_5 = P_7 = P_9$) are: $P_2 = P_6 = P_7$; and $P_3 = P_8 = P_9$.
In the former case, we have $P = P_2 = P_6 = Q$, while $-P = P_3 = P_8 = R \ast Q$, therefore $R = (-P) \ast Q = (-P) \ast P = O$, contradicting Table \ref{tab:addition_on_elliptic_curve__equality_for_points}.
Hence we have $P_3 = P_8 = P_9$, therefore $P_9 = P_8 = P_{10}$, as desired.
\end{proof}

\begin{lemma}
\label{lem:addition_on_E__2-6-7_4-5}
If $P_2 = P_6 = P_7$ and $P_4 = P_5$, then $P_9 = P_{10}$.
\end{lemma}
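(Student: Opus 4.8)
The plan is to translate the hypotheses $P_2 = P_6 = P_7$ and $P_4 = P_5$ into equalities among the underlying curve points $P$, $Q$, $R$ and then to chase the definitions of $\ast$ and negation to force $P_9 = P_{10}$ directly, without invoking the linear-algebraic machinery of Section~\ref{appendix:sec:proof_thm:rational_point_group__less_coincidence}. Recall the names: $P_2 = P$, $P_4 = R$, $P_5 = -R$, $P_6 = Q$, and $P_7 = P \ast Q$. The first hypothesis $P_2 = P_6 = P_7$ thus reads $P = Q = P \ast Q$, and the second hypothesis $P_4 = P_5$ reads $R = -R$. From $P = Q$ we may simplify $P \ast Q = P \ast P$, and combined with $P = P \ast Q$ this gives $P = P \ast P$; by the definition of $\ast$ this means the tangent line $\overline{P|P}$ meets $E$ only at $P$ (no distinct second intersection point). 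I would record the consequence $P \ast P = P$, i.e. $P + P = -(P \ast P) = -P$, which says $P$ is a point of order dividing $3$ in the informal sense, though I will only use the $\ast$-level relation.

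\textbf{Reducing the two sides.}
Next I would compute both sides of Theorem~\ref{thm:rational_point_group} under these substitutions. Using $Q = P$ and $R = -R$, the left-hand side is $(P \ast Q) \ast (-R) = (P \ast P) \ast (-R) = P \ast (-R) = P \ast R$, where the middle step uses $P \ast P = P$ and the last step uses $-R = R$. The right-hand side is $(R \ast Q) \ast (-P) = (R \ast P) \ast (-P)$, again using $Q = P$. By the cancellation property \eqref{eq:cancellation_property}, $(R \ast P) \ast (-P)$ is not immediately $R$ unless the middle argument matches; instead I would apply the negation-of-product rule $-(R \ast P) = (-R) \ast (-P)$ together with $(S \ast T) \ast T = S$ carefully. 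The cleanest route is: $(R \ast P) \ast (-P) = -\bigl((-(R \ast P)) \ast P\bigr) \cdot$(sign bookkeeping); concretely I would use the stated identities $(P \ast Q) \ast P = Q$ and $-(P\ast Q) = (-P)\ast(-Q)$ to rewrite the RHS. The goal is to show both sides collapse to the same point, namely $P \ast R$.

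\textbf{The main obstacle.}
The hard part will be handling the right-hand side rewriting without circular use of associativity, since the identities available are only the first-principles cancellation relations in \eqref{eq:cancellation_property}, not associativity itself. In particular, turning $(R \ast P) \ast (-P)$ into something manifestly equal to $P \ast R = P \ast (-R)$ requires threading the relation $P \ast P = P$ through at the right moment, and possibly splitting on whether $R$ lies on $\overline{P|P}$ or on $\overline{P|R}$. I would also double-check consistency against Table~\ref{tab:addition_on_elliptic_curve__equality_for_points}: several ``obvious'' cases have already been discharged, so I may assume $P, R \neq O$, $R \neq \pm P$, $Q \neq O$, and the other tabulated non-coincidences, which should rule out degenerate sub-cases and let the algebraic chain close.

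\textbf{Fallback strategy.}
If the direct $\ast$-chase stalls, the fallback is the reduction strategy used in Lemmas~\ref{lem:addition_on_E__3-8-9} and \ref{lem:addition_on_E__3-8-10_not_5-7-9}: invoke Proposition~\ref{prop:addition_on_E__multiplicity_less_than_three} to dispose of the subcase where no three of $P_1,\dots,P_8,P_{10}$ coincide, and then enumerate (via the table) the finitely many ways that some three of $P_1,\dots,P_8,P_{10}$ could coincide consistently with $P_2=P_6=P_7$ and $P_4=P_5$. Each such forced triple-coincidence should, after substituting the curve-point relations $P=Q=P\ast P$ and $R=-R$, either contradict a tabulated non-coincidence or directly yield $P_9 = P_{10}$. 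I expect only a small number of surviving subcases, so this enumeration should terminate quickly and serve as a clean backstop for any gap in the direct computation.
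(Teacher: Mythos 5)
Your direct $\ast$-chase correctly reduces the left-hand side to $P \ast R$ (using $P \ast Q = P$ and $R = -R$), but, as you yourself note, it stalls at exactly the point that matters: the identity $(R \ast P) \ast (-P) = P \ast R$ is not derivable from the cancellation relations \eqref{eq:cancellation_property} alone --- it is an instance of the very associativity-type statement being proved, so some new input is needed to close it. The real problem is that your fallback does not supply that input. Proposition \ref{prop:addition_on_E__multiplicity_less_than_three} and its $P_{10}$-variant require that no three points among $P_1,\dots,P_9$ (resp.\ among $P_1,\dots,P_8,P_{10}$) coincide; but the hypothesis $P_2 = P_6 = P_7$ of this lemma is itself a triple coincidence among $P_1,\dots,P_8$, so both of those conditions fail identically, and the \lq\lq subcase where no three of $P_1,\dots,P_8,P_{10}$ coincide'' that you propose to dispose of is empty. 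Your subsequent enumeration then merely returns the given coincidence $P_2 = P_6 = P_7$, which neither contradicts Table \ref{tab:addition_on_elliptic_curve__equality_for_points} nor directly yields $P_9 = P_{10}$; the argument goes in a circle.

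The missing idea is a change of configuration. The paper sets $P' \eqdef -P$, $Q' \eqdef P \ast R$, $R' \eqdef R$ and forms the associated points $P'_i$. Using $P \ast P = P$ and the cancellation identities one checks $P'_3 = P'_8 = P'_{10} = P$, and the hypothesis $R = -R$ together with the table rules out $P'_5 = P'_7 = P'_9$ (it would force $-R = (-R)\ast(-R) = R \ast (-R) = O$). Hence Lemma \ref{lem:addition_on_E__3-8-10_not_5-7-9} applies to the primed configuration and gives $P'_9 = P'_{10}$, i.e.\ $((-P) \ast (P \ast R)) \ast (-R) = P$, whence $(-P)\ast(P\ast R) = P \ast (-R)$ by cancellation; this is precisely the identity your direct chase could not reach, and it finishes the proof. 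Without some such transfer to an already-settled case, your outline cannot be completed.
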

\begin{proof}
By the hypothesis, we have $P \ast P = P = Q$.
We put $P' \eqdef -P$, $Q' \eqdef P \ast R$, and $R' \eqdef R$, and from these points we define the points $P'_i$ in the same way as the points $P_i$.
Then we have
\[
P'_3 = -P' = P \,,\,
P'_8 = R' \ast Q' = R \ast (P \ast R) = P \,,\,
P'_{10} = P'_8 \ast P'_3 = P \ast P = P \enspace,
\]
therefore $P'_3 = P'_8 = P'_{10}$.
Now if $P'_5 = P'_7 = P'_9$, then, as $P'_5 = -R' = -R$ and $P'_9 = P'_7 \ast P'_5$ by definition, it follows that $(-R) \ast (-R) = -R$; while $-R = P_5 = P_4 = R$ by the hypothesis.
This implies that $-R = (-R) \ast (-R) = R \ast (-R) = O$, contradicting Table \ref{tab:addition_on_elliptic_curve__equality_for_points}.
Hence $P'_5 = P'_7 = P'_9$ does not hold, therefore Lemma \ref{lem:addition_on_E__3-8-10_not_5-7-9} applied to the points $P'_i$ implies that $P'_9 = P'_{10}$ and
\[
((-P) \ast (P \ast R)) \ast (-R)
= P'_9
= P'_{10}
= (R \ast (P \ast R)) \ast P
= P \ast P
= P \enspace.
\]
Hence we have $(-P) \ast (P \ast R) = P \ast (-R)$ and
\[
P_9
= (P \ast Q) \ast (-R)
= P \ast (-R)
= (-P) \ast (P \ast R)
= (R \ast Q) \ast (-P)
= P_{10}
\]
as desired.
This completes the proof.
\end{proof}

\begin{lemma}
\label{lem:addition_on_E__2-6-7}
If $P_2 = P_6 = P_7$ or $P_4 = P_6 = P_8$, then $P_9 = P_{10}$.
\end{lemma}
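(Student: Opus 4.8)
The plan is to reduce the two hypotheses to one by symmetry and then reuse the substitution trick of Lemma~\ref{lem:addition_on_E__2-6-7_4-5}. First I would observe that interchanging $P$ and $R$ turns $P_4=P_6=P_8$ into $P_2=P_6=P_7$ (while swapping $P_9$ and $P_{10}$) and preserves the claim, so it suffices to treat $P_2=P_6=P_7$, i.e. $P=Q$ and $P\ast P=P$. Exactly as in Lemma~\ref{lem:addition_on_E__2-6-7_4-5} I would set $P'\eqdef -P$, $Q'\eqdef P\ast R$, $R'\eqdef R$ and form the associated points $P'_i$; using $P\ast P=P$ and the identities \eqref{eq:cancellation_property} one checks $P'_3=P'_8=P'_{10}=P$, so the primed configuration meets the hypothesis of Lemma~\ref{lem:addition_on_E__3-8-10_not_5-7-9}. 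If the excluded pattern $P'_5=P'_7=P'_9$ does \emph{not} occur, then Lemma~\ref{lem:addition_on_E__3-8-10_not_5-7-9} gives $P'_9=P'_{10}$, which translates back to $P_9=P_{10}$ by the very same computation used at the end of the proof of Lemma~\ref{lem:addition_on_E__2-6-7_4-5}.

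The crux, and what I expect to be the main obstacle, is ruling out the leftover possibility $P'_5=P'_7=P'_9$: this is precisely the branch that Lemma~\ref{lem:addition_on_E__2-6-7_4-5} dispatched using its extra hypothesis $P_4=P_5$, which is unavailable here. Unwinding $P'_5=P'_7$ and $P'_7=P'_9$ through \eqref{eq:cancellation_property} as in that proof, I would extract two clean relations: $R\ast R=R$ (so $R$ is a flex) and $P\ast R=-(P\ast R)$ (so $S\eqdef P\ast R$ satisfies $S=-S$). I would stress that these relations cannot be refuted by purely formal manipulation, since the conclusion $P=-R$ one is aiming for genuinely encodes the group law still being proved; this is the source of the difficulty.

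To break the circularity I would feed these relations into the \emph{already proved} Lemma~\ref{lem:addition_on_E__2-6-7_4-5}, applied to the auxiliary triple $(P,P,S)$ in place of $(P,Q,R)$. Its points satisfy the hypothesis of that lemma, because $P\ast P=P$ gives the pattern $\bar P_2=\bar P_6=\bar P_7$ while $S=-S$ is exactly $\bar P_4=\bar P_5$. Hence Lemma~\ref{lem:addition_on_E__2-6-7_4-5} yields $\bar P_9=\bar P_{10}$, and a short computation with \eqref{eq:cancellation_property} (using $P\ast P=P$ and $S=-S$) evaluates
\[
\bar P_9=(P\ast P)\ast(-S)=P\ast S=R,\qquad \bar P_{10}=(S\ast P)\ast(-P)=R\ast(-P),
\]
so that $R=R\ast(-P)$. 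Applying the cancellation identity $(X\ast Y)\ast X=Y$ with $X=R$, $Y=-P$ then gives $R\ast R=-P$, and the flex relation $R\ast R=R$ forces $R=-P$, i.e. $P=-R$, contradicting Table~\ref{tab:addition_on_elliptic_curve__equality_for_points}. Thus $P'_5=P'_7=P'_9$ cannot occur, we are always in the first case, and $P_9=P_{10}$ follows for $P_2=P_6=P_7$ and hence, by symmetry, for $P_4=P_6=P_8$ as well.
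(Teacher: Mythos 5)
Your proof is correct, and it follows the paper's skeleton almost exactly: the same symmetry reduction to $P_2=P_6=P_7$, the same auxiliary triple $(P',Q',R')=(-P,\,P\ast R,\,R)$ feeding into Lemma~\ref{lem:addition_on_E__3-8-10_not_5-7-9}, and the same residual difficulty when $P'_5=P'_7=P'_9$. The one place you genuinely diverge is in disposing of that residual case. The paper extracts only the relation $P\ast R=-(P\ast R)$, applies Lemma~\ref{lem:addition_on_E__2-6-7_4-5} to the triple $(-P,-P,P\ast R)$, and reads off $P\ast(-R)=(P\ast R)\ast(-P)$ directly, so that $P_9=P_{10}$ holds \emph{in} that case. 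You instead also extract the flex relation $R\ast R=R$ from $P'_7=P'_9$, apply Lemma~\ref{lem:addition_on_E__2-6-7_4-5} to $(P,P,P\ast R)$, and combine $R=R\ast(-P)$ with $R\ast R=R$ and the Table~\ref{tab:addition_on_elliptic_curve__equality_for_points} exclusion $P\neq -R$ to show the case is vacuous. Both routes are sound (your computations of $\bar P_9=R$ and $\bar P_{10}=R\ast(-P)$ check out, and the needed hypotheses $\bar P_2=\bar P_6=\bar P_7$ and $\bar P_4=\bar P_5$ do hold for your triple); the paper's version is marginally leaner in that it never needs $R\ast R=R$ nor an appeal to the table, while yours has the mild structural appeal of showing the bad configuration cannot occur at all. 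Either way the lemma is established.
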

\begin{proof}
By the symmetry of the claim with respect to $P$ and $R$, it suffices to consider the case $P_2 = P_6 = P_7$, i.e., $P = Q = P \ast Q$.
By defining the points $P'_i$ in the same way as the proof of Lemma \ref{lem:addition_on_E__2-6-7_4-5}, the claim holds similarly when $P'_5 = P'_7 = P'_9$ does not hold.
We consider the other case where $P'_5 = P'_7 = P'_9$ holds.
We have $-R = P'_5 = P'_7 = (-P) \ast (P \ast R)$, therefore $P \ast R = (-P) \ast (-R) = -(P \ast R)$.
We put $P'' = Q'' \eqdef -P$ and $R'' \eqdef P \ast R$, and from these points we define the points $P''_i$ in the same way as the points $P_i$.
Then we have $P''_7 = P'' \ast Q'' = (-P) \ast (-P) = -(P \ast P) = -P = P''_2 = P''_6$ and $P''_4 = R'' = -R'' = P''_5$.
By Lemma \ref{lem:addition_on_E__2-6-7_4-5} applied to the $P''_i$, we have $P''_9 = P''_{10}$.
Now
\[
P''_9
= ((-P) \ast (-P)) \ast ((-P) \ast (-R))
= (-P) \ast ((-P) \ast (-R))
= -R
\]
and $P''_{10} = ((P \ast R) \ast (-P)) \ast P$, therefore the fact $P''_9 = P''_{10}$ implies
\[
P_9
= (P \ast Q) \ast (-R)
= P \ast (-R)
= (P \ast R) \ast (-P)
= (R \ast Q) \ast (-P)
= P_{10}
\]
as desired.
This completes the proof.
\end{proof}

\begin{lemma}
\label{lem:addition_on_E__3-8-10}
If $P_3 = P_8 = P_{10}$ or $P_5 = P_7 = P_9$, then $P_9 = P_{10}$.
\end{lemma}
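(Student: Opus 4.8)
The plan is to reduce the statement to the single configuration not already covered by Lemma~\ref{lem:addition_on_E__3-8-10_not_5-7-9}, and then to kill that configuration by feeding the \emph{inflection} information it carries into the already-proved Lemma~\ref{lem:addition_on_E__2-6-7} through a degenerate auxiliary triple. First, since swapping $P$ and $R$ interchanges $P_3\leftrightarrow P_5$, $P_7\leftrightarrow P_8$, and $P_9\leftrightarrow P_{10}$, the two hypotheses $P_3=P_8=P_{10}$ and $P_5=P_7=P_9$ are exchanged while the conclusion $P_9=P_{10}$ is preserved; hence by the symmetry of the claim I may assume $P_3=P_8=P_{10}$. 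If $P_5=P_7=P_9$ does not hold, Lemma~\ref{lem:addition_on_E__3-8-10_not_5-7-9} already gives $P_9=P_{10}$, so the only case left is the one in which \emph{both} triples of coincidences occur.

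In that remaining case I would first extract its algebraic content using only \eqref{eq:cancellation_property}. From $P_3=P_8$ and $P_5=P_7$ one reads off $R\ast Q=-P$ and $P\ast Q=-R$, and then $P_3=P_{10}$ and $P_5=P_9$ force $(-P)\ast(-P)=-P$ and $(-R)\ast(-R)=-R$, i.e.\ (after applying $-(\cdot)$) the inflection relations $P\ast P=P$ and $R\ast R=R$. Cancelling in $R\ast Q=-P$ and $P\ast Q=-R$ also yields $Q=R\ast(-P)=P\ast(-R)$. Note that here $P_9=P_5=-R$ and $P_{10}=P_3=-P$, so the target $P_9=P_{10}$ is exactly the assertion $P=R$.

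To produce $P=R$ I would apply Lemma~\ref{lem:addition_on_E__2-6-7} to the degenerate triple $(P,P,R)$: its hypothesis $P\ast P=P$ is the flex relation just obtained, and its conclusion reads $P\ast(-R)=(R\ast P)\ast(-P)$. Combining this with $P\ast(-R)=R\ast(-P)$ gives $(R\ast P)\ast(-P)=R\ast(-P)$, and cancelling the common second factor $-P$ (Case~\ref{item:obvious__P_R}) yields $R\ast P=R$. Finally, the other flex relation $R\ast R=R$ turns this into $R\ast P=R\ast R$, so one more cancellation (via commutativity and Case~\ref{item:obvious__P_R}) gives $P=R$, and therefore $P_9=P_{10}$.

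The step I expect to be the main obstacle is recognizing that the residual case is governed entirely by the two inflection relations $P\ast P=P$ and $R\ast R=R$, and then finding the right device to exploit them: the identity $P\ast(-R)=(R\ast P)\ast(-P)$ is \emph{not} universally valid (it encodes $3P=O$) and is available precisely because $(P,P,R)$ satisfies the hypothesis of Lemma~\ref{lem:addition_on_E__2-6-7}. As in the proofs of Lemmas~\ref{lem:addition_on_E__2-6-7_4-5} and~\ref{lem:addition_on_E__2-6-7}, I would rely on the fact that the earlier lemma holds for an arbitrary input triple, so applying it to $(P,P,R)$ is legitimate despite the coincidences built into that triple.
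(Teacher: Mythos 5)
Your argument is correct and follows essentially the same route as the paper: reduce by symmetry to $P_3=P_8=P_{10}$, dispose of the sub-case where $P_5=P_7=P_9$ fails via Lemma~\ref{lem:addition_on_E__3-8-10_not_5-7-9}, and in the doubly degenerate sub-case exploit the flex relations by applying the case $P_2=P_6=P_7$ of Lemma~\ref{lem:addition_on_E__2-6-7} to a repeated triple, arriving at $R\ast P=R$ and hence $P=R$. The paper uses the auxiliary triple $(-P,-P,-Q)$ and phrases the outcome as a contradiction with Table~\ref{tab:addition_on_elliptic_curve__equality_for_points}, whereas you use $(P,P,R)$ and read $P=R$ directly as $P_9=P_{10}$; this is only a cosmetic difference.
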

\begin{proof}
By symmetry, it suffices to consider the case $P_3 = P_8 = P_{10}$, i.e., $-P = R \ast Q$ and $(-P) \ast (-P) = -P$.
Now assume for the contrary that $P_5 = P_7 = P_9$.
Then we have $-R = P \ast Q$ and $(-R) \ast (-R) = -R$, therefore $R \ast R = R$.
This implies that $R = (-P) \ast Q = ((-P) \ast (-P)) \ast Q$; by applying the case $P_2 = P_6 = P_7$ of Lemma \ref{lem:addition_on_E__2-6-7} to the right-hand side, we have
\[
((-P) \ast (-P)) \ast Q
= ((-Q) \ast (-P)) \ast P
= (-(P \ast Q)) \ast P
= R \ast P \enspace.
\]
Hence we have $R = R \ast P$, therefore $P = R \ast R = R$, contradicting Table \ref{tab:addition_on_elliptic_curve__equality_for_points}.
This implies that $P_5 = P_7 = P_9$ does not hold, therefore the claim follows from Lemma \ref{lem:addition_on_E__3-8-10_not_5-7-9}.
\end{proof}

By combining Lemmas \ref{lem:addition_on_E__3-8-9},  \ref{lem:addition_on_E__2-6-7}, and \ref{lem:addition_on_E__3-8-10}, the six possibilities listed above are exhausted and the claim $P_9 = P_{10}$ holds in any case.
This completes the proof of Theorem \ref{thm:rational_point_group}.

\end{document}